\def\argmin{\mathop{\rm argmin}}
\def\sign{\mathop{\rm sign}}
\def\arg{\mathop{\rm arg}}
\newtheorem{theorem}{Theorem}
\newtheorem{lemma}{Lemma}
\newtheorem{pro}{Proposition}
\newtheorem{ass}{Assumption}
\newtheorem{mydef}{Definition}
\newcommand{\bbeta}{\bm{\beta}}
\newcommand{\bmu}{\bm{\mu}}
\newcommand{\bSigma}{\bm{\Sigma}}
\newcommand{\PP}{\mathbb{P}}
\newcommand{\RR}{\mathbb{R}}
\newcommand{\cE}{\mathcal{E}}
\newcommand{\cG}{\mathcal{G}}
\newcommand{\cO}{\mathcal{O}}
\newcommand{\cZ}{\mathcal{Z}}
\newcommand{\ba}{\mathbf{a}}
\newcommand{\bc}{\mathbf{c}}
\newcommand{\bv}{\mathbf{v}}
\newcommand{\bw}{\mathbf{w}}
\newcommand{\bx}{\mathbf{x}}
\newcommand{\bz}{\mathbf{z}}
\newcommand{\bA}{\mathbf{A}}
\newcommand{\bB}{\mathbf{B}}
\newcommand{\bC}{\mathbf{C}}
\newcommand{\bD}{\mathbf{D}}
\newcommand{\bE}{\mathbf{E}}
\newcommand{\bI}{\mathbf{I}}
\newcommand{\bX}{\mathbf{X}}
\newcommand{\bY}{\mathbf{Y}}
\newcommand*\samethanks[1][\value{footnote}]{\footnotemark[#1]}
\title{Implicit Regularization in Over-Parameterized Support Vector Machine}
\author{%
 Yang Sui\thanks{Equal contributions.} 
  \quad Xin He\samethanks[1]
  \quad Yang Bai\thanks{Corresponding author.} \\
   School of Statistics and Management\\Shanghai University of Finance and Economics\\
   \texttt{suiyang1027@stu.sufe.edu.cn;he.xin17@mail.shufe.edu.cn;}\\
   \texttt{statbyang@mail.shufe.edu.cn}
}
\begin{document}
\maketitle

\begin{abstract}
In this paper, we design a regularization-free algorithm for high-dimensional support vector machines (SVMs) by integrating over-parameterization with Nesterov's smoothing method, and provide theoretical guarantees for the induced implicit regularization phenomenon. In particular, we construct an over-parameterized hinge loss function and estimate the true parameters by leveraging regularization-free gradient descent on this loss function. The utilization of Nesterov's method enhances the computational efficiency of our algorithm, especially in terms of determining the stopping criterion and reducing computational complexity. With appropriate choices of initialization, step size, and smoothness parameter, we demonstrate that unregularized gradient descent achieves a near-oracle statistical convergence rate. Additionally, we verify our theoretical findings through a variety of numerical experiments and compare the proposed method with explicit regularization. Our results illustrate the advantages of employing implicit regularization via gradient descent in conjunction with over-parameterization in sparse SVMs.

\end{abstract}

\section{Introduction}


In machine learning, over-parameterized models, such as deep learning models, are commonly used for regression and classification \cite{lecun2015deep,voulodimos2018deep,otter2020survey,fan2021selective}. The corresponding optimization tasks are primarily tackled using gradient-based methods. Despite challenges posed by the nonconvexity of the objective function and over-parameterization \cite{yunsmall}, empirical observations show that even simple algorithms, such as gradient descent, tend to converge to a global minimum. This phenomenon is known as the implicit regularization of variants of gradient descent, which acts as a form of regularization without an explicit regularization term \cite{neyshabur2015search,neyshabur2017geometry}.


 Implicit regularization has been extensively studied in many classical statistical problems, such as linear regression \cite{vaskevicius2019implicit,li2021implicit,zhao2022high} and matrix factorization \cite{gunasekar2017implicit,li2018algorithmic,arora2019implicit}. These studies have shown that unregularized gradient descent can yield optimal estimators under certain conditions. However, a deeper understanding of implicit regularization in classification problems, particularly in support vector machines (SVMs), remains limited. Existing studies have focused on specific cases and alternative regularization approaches \cite{soudry2018implicit,molinari2021iterative,apidopoulos2022iterative}. A comprehensive analysis of implicit regularization via direct gradient descent in SVMs is still lacking. We need further investigation to explore the implications and performance of implicit regularization in SVMs.

The practical significance of such exploration becomes evident when considering today's complex data landscapes and the challenges they present. 
In modern applications, we often face classification challenges due to redundant features. Data sparsity is notably evident in classification fields like finance, document classification, and gene expression analysis. 
For instance, in genomics, only a few genes out of thousands are used for disease diagnosis and drug discovery \cite{huang2012identification}. Similarly, spam classifiers rely on a small selection of words from extensive dictionaries \cite{almeida2011contributions}. These scenarios highlight limitations in standard SVMs and those with explicit regularization, like the $\ell_2$ norm. From an applied perspective, incorporating sparsity into SVMs is an intriguing research direction. While sparsity in regression has been deeply explored recently, sparse SVMs have received less attention. Discussions typically focus on generalization error and risk analysis, with limited mention of variable selection and error bounds \cite{peng2016error}. Our study delves into the implicit regularization in classification, complementing the ongoing research in sparse SVMs.


In this paper, we focus on the implicit regularization of gradient descent applied to high-dimensional sparse SVMs. Contrary to existing studies on implicit regularization in first-order iterative methods for nonconvex optimization that primarily address regression, we investigate this intriguing phenomenon of gradient descent applied to SVMs with hinge loss. By re-parameterizing the parameters using the Hadamard product, we introduce a novel approach to nonconvex optimization problems. With proper initialization and parameter tuning, our proposed method achieves the desired statistical convergence rate. Extensive simulation results reveal that our method outperforms the estimator under explicit regularization in terms of estimation error, prediction accuracy, and variable selection. Moreover, it rivals the performance of the gold standard oracle solution, assuming the knowledge of true support.

\subsection{Our Contributions}

First, we reformulate the parameter $\bbeta$ as $\bw\odot\bw-\bv\odot\bv$, where $\odot$ denotes the Hadamard product operator, resulting in a non-convex optimization problem. This re-parameterization technique has not been previously employed for classification problems. Although it introduces some theoretical complexities, it is not computationally demanding. Importantly, it allows for a detailed theoretical analysis of how signals change throughout iterations, offering a novel perspective on the dynamics of gradient descent not covered in prior works \cite{gunasekar2018implicit,soudry2018implicit}. With the help of re-parameterization, we provide a theoretical analysis showing that with appropriate choices of initialization size $\alpha$, step size $\eta$, and smoothness parameter $\gamma$, our method achieves a near-oracle rate of $\sqrt{s\log p/n}$, where $s$ is the number of the signals, $p$ is the dimension of $\bbeta$, and $n$ is the sample size. Notice that the near-oracle rate is achievable via explicit regularization \cite{peng2016error,zhou2022communication,kharoubi2023high}. To the best of our knowledge, this is the first study that investigates implicit regularization via gradient descent and establishes the near-oracle rate specifically in classification.

Second, we employ a simple yet effective smoothing technique \cite{Nesterov2005,zhou2010nesvm} for the re-parameterized hinge loss, addressing the non-differentiability of the hinge loss. Additionally, our method introduces a convenient stopping criterion post-smoothing, which we discuss in detail in Section \ref{sec2.2}. Notably, the smoothed gradient descent algorithm is not computationally demanding, primarily involving vector multiplication. The variables introduced by the smoothing technique mostly take values of $0$ or $1$ as smoothness parameter $\gamma$ decreases, further streamlining the computation. Incorporating Nesterov's smoothing is instrumental in our theoretical derivations. Directly analyzing the gradient algorithm with re-parameterization for the non-smooth hinge loss, while computationally feasible, introduces complexities in theoretical deductions. In essence, Nesterov's smoothing proves vital both computationally and theoretically.

Third, to support our theoretical results, we present finite sample performances of our method through extensive simulations, comparing it with both the $\ell_1$-regularized estimator and the gold standard oracle solution. We demonstrate that the number of iterations $t$ in gradient descent parallels the role of the $\ell_1$ regularization parameter $\lambda$. When chosen appropriately, both can achieve the near-oracle statistical convergence rate. Further insights from our simulations illustrate that, firstly, in terms of estimation error, our method generalizes better than the $\ell_1$-regularized estimator. Secondly, for variable selection, our method significantly reduces false positive rates. Lastly, due to the efficient transferability of gradient information among machines, our method is easier to be paralleled and generalized to large-scale applications. Notably, while our theory is primarily based on the Sub-Gaussian distribution assumption, our method is actually applicable to a much wider range. Additional simulations under heavy-tailed distributions still yield remarkably desired results. Extensive experimental results indicate that our method's performance, employing implicitly regularized gradient descent in SVMs, rivals that of algorithms using explicit regularization. In certain simple scenarios, it even matches the performance of the oracle solution.

\subsection{Related Work}


Frequent empirical evidence shows that simple algorithms such as (stochastic) gradient descent tend to find the global minimum of the loss function despite nonconvexity. To understand this phenomenon, studies by \cite{neyshabur2015search,neyshabur2017geometry,keskarlarge,wilson2017marginal,zhang2022statistical} proposed that generalization arises from the implicit regularization of the optimization algorithm.
Specifically, these studies observe that in over-parameterized statistical models, although optimization problems may contain bad local errors, optimization algorithms, typically variants of gradient descent, exhibit a tendency to avoid these bad local minima and converge towards better solutions. Without adding any regularization term in the optimization objective, the implicit preference of the optimization algorithm itself plays the role of regularization.

Implicit regularization has attracted significant attention in well-established statistical problems, including linear regression \cite{gunasekar2018implicit,vaskevicius2019implicit,woodworth2020kernel,li2021implicit,zhao2022high} and matrix factorization \cite{gunasekar2017implicit,li2018algorithmic,arora2019implicit,ma2021sign,ma2022global}. In high-dimensional sparse linear regression problems, \cite{vaskevicius2019implicit,zhao2022high} introduced a re-parameterization technique and demonstrated that unregularized gradient descent yields an estimator with optimal statistical accuracy under the Restricted Isometric Property (RIP) assumption \cite{candes2005decoding}. \cite{fan2022understanding} obtained similar results for the single index model without the RIP assumption. In low-rank matrix sensing, \cite{gunasekar2017implicit,li2018algorithmic} revealed that gradient descent biases towards the minimum nuclear norm solution when initiated close to the origin. Additionally, \cite{arora2019implicit} demonstrated the same implicit bias towards the nuclear norm using a depth-$N$ linear network. 
Nevertheless, research on the implicit regularization of gradient descent in classification problems remains limited. \cite{soudry2018implicit} found that the gradient descent estimator converges to the direction of the max-margin solution on unregularized logistic regression problems. 
In terms of the hinge loss in SVMs, \cite{apidopoulos2022iterative} provided a diagonal descent approach and established its regularization properties. However, these investigations rely on the diagonal regularization process \cite{bahraoui1994convergence}, and their algorithms' convergence rates depend on the number of iterations, and are not directly compared with those of explicitly regularized algorithms. 
Besides, Frank-Wolfe method and its variants have been used for classification \cite{lacoste2013block,tajima2021frank}. However, sub-linear convergence to the optimum requires the strict assumption that both the direction-finding step and line search step are performed exactly \cite{jaggi2013revisiting}.

\section{Model and Algorithms}
\subsection{Notations}
Throughout this work, we denote vectors with boldface letters and real numbers with normal font. Thus, $\bw$ denotes a vector and $w_i$ denotes the $i$-th coordinate
of $\bw$. We use $[n]$ to denote the set $\{1,2\ldots,n\}$. For any subset $S$ in $[n]$ and a vector $\bw$, we use $\bw_S$ to denote the vector whose $i$-th entry is $w_i$ if $i\in S$ and $0$ otherwise. For any given vector $\bw$, let $\Arrowvert\bw\Arrowvert_1$, $\Arrowvert\bw\Arrowvert$ and $\Arrowvert\bw\Arrowvert_{\infty}$ denote its $\ell_1$, $\ell_2$ and $\ell_{\infty}$ norms. Moreover, for any two vectors $\bw,\bv\in\RR^p$, we define $\bw \odot \bv\in\RR^p $ as the Hadamard product of vectors $\bw$ and $\bv$, whose components are $w_iv_i$ for $i\in[p]$. For any given matrix $\bX\in\RR^{p_1\times p_2}$, we use $\Arrowvert\bX\Arrowvert_F$ and $\Arrowvert\bX\Arrowvert_S$ to represent the Frobenius norm and spectral norm of matrix $\bX$, respectively. In addition, we let $\{a_n,b_n\}_{n\ge1}$ be any two positive sequences. We write $a_n\lesssim b_n$ if there exists a universal constant $C$ such that $a_n\le C\cdot b_n$ and we write $a_n\asymp b_n$ if we have $a_n\lesssim b_n$ and $b_n\lesssim a_n$. Moreover, $a_n = \cO(b_n)$ shares the same meaning with $a_n\lesssim b_n$.

\subsection{Over-parameterization for $\ell_1$-regularized SVM }\label{sec2.2}

Given a random sample  ${\cal Z}^n=\{(\bx_i,y_i)\}_{i=1}^n$ with $\bx_i\in\RR^p$ denoting the covariates and $y_i \in \{0,1\}$ denoting the corresponding label, we consider the following $\ell_1$-regularized SVM: 
\begin{align}\label{eqn:orign}
\min_{\bbeta\in\RR^p}\frac{1}{n}\sum_{i=1}^n (1-y_i {\bx}_i^T\bbeta)_+ +  \lambda \| \bbeta\|_1,
\end{align}

where $(\cdot)_+=\max\{\cdot, 0\}$ denotes the hinge loss and $\lambda$ denotes the $\ell_1$ regularization parameter.
Let $L_{\cZ^n}(\bbeta)$ denote the first term of the right-hand side of \eqref{eqn:orign}. Previous works have shown that the $\ell_1$-regularized estimator of the optimization problem \eqref{eqn:orign} and its extensions achieve a near-oracle rate of convergence to the true parameter $\bbeta^*$ \cite{peng2016error,wang2021improved,zhang2022statistical}. In contrast, rather than imposing the $\ell_1$ regularization term in \eqref{eqn:orign}, we minimize the hinge loss function $L_{\cZ^n}(\bbeta)$ directly to obtain a sparse estimator. Specifically, we re-parameterize $\bbeta$ as $\bbeta=\bw\odot\bw-\bv\odot\bv$, using two vectors $\bw$ and $\bv$ in $\RR^p$. Consequently, $L_{\cZ^n}(\bbeta)$ can be reformulated as $\cE_{\cZ^n}(\bw,\bv)$:
\begin{align}\label{over}
   \cE_{\cZ^n}(\bw,\bv) =\frac{1}{n}\sum_{i=1}^n \big (1-y_i {\bx}_i^T(\bw\odot \bw - \bv \odot \bv) \big )_+ .
\end{align}

 Note that the dimensionality of $\bbeta$ in \eqref{eqn:orign} is $p$, but a $2p$-dimensional parameter is involved in \eqref{over}. This indicates that we over-parameterize $\bbeta$ via $\bbeta=\bw\odot\bw-\bv\odot\bv$ in \eqref{over}. We briefly describe our motivation on over-parameterizing $\bbeta$ this way. Following \cite{Hoff2017}, $\|\bbeta\|_1=\argmin_{ \bbeta=\ba\odot\bc} (\|\ba\|^2 + \|\bc\|^2)/2$. Thus, the optimization problem \eqref{eqn:orign} translates to $\min_{\ba,\bc}\cE_{\cZ^n}(\ba,\bc)+\lambda( \| \ba\|^2 + \|\bc\|^2 )/2$. For a better understanding of implicit regularization by over-parameterization, we set $\bw=\frac{\ba+\bc}{2}$ and $\bv=\frac{\ba-\bc}{2}$, leading to $\bbeta=\bw\odot \bw - \bv \odot \bv$. This incorporation of new parameters $\bw$ and $\bv$ effectively over-parameterizes the problem. Finally, we drop the explicit $\ell_2$ regularization term $\lambda( \| \ba\|^2 + \|\bc\|^2 )/2$ and perform gradient descent to minimize the empirical loss $\cE_{\cZ^n}(\bw,\bv)$ in \eqref{over}, in line with techniques seen in neural network training, high-dimensional regression \cite{vaskevicius2019implicit,li2021implicit,zhao2022high}, and high-dimensional single index models \cite{fan2022understanding}.

\subsection{Nesterov's smoothing}

It is well-known that the hinge loss function is not differentiable. As a result, traditional first-order optimization methods, such as the sub-gradient and stochastic gradient methods, converge slowly and are not suitable for large-scale problems \cite{zhou2010nesvm}. Second-order methods, like the Newton and Quasi-Newton methods, can address this by replacing the hinge loss with a differentiable approximation \cite{chapelle2007training}. Although these second-order methods might achieve better convergence rates, the computational cost associated with computing the Hessian matrix in each iteration is prohibitively high. Clearly, optimizing \eqref{over} using gradient-based methods may not be the best choice.


To address the trade-off between convergence rate and computational cost, we incorporate Nesterov's method \cite{Nesterov2005} to smooth the hinge loss and then update the parameters via gradient descent. By employing Nesterov's method, \eqref{over} can be reformulated as the following saddle point function:
\begin{align*}
{\cal E}_{{\cal Z}^n}(\bw, \bv)  \equiv \max_{\bmu \in {\cal P}_1}\frac{1}{n}\sum_{i=1}^n  \big (1-y_i {\bx}_i^T(\bw\odot \bw - \bv \odot \bv) \big ) \mu_i,
\end{align*}
where ${\cal P}_1=\{ \bmu  \in \RR^n: 0\leq \mu_i \leq 1 \}$. According to  \cite{Nesterov2005}, the above saddle point function can be smoothed by subtracting a prox-function $d_{\gamma}(\bmu)$, where $d_{\gamma}(\bmu)$ is a strongly convex function of $\bmu$ with a smoothness parameter $\gamma>0$. Throughout this paper, we select the prox-function as $d_{\gamma}(\bmu)=\frac{\gamma}{2}\|\bmu\|^2$. Consequently, ${\cal E}_{{\cal Z}^n}(\bw, \bv)$ can be approximated by
\begin{align}\label{eqn:231}
	{\cal E}_{{\cal Z}^n, \gamma}^*(\bw,\bv) \equiv \max_{\bmu\in {\cal P}_1} \left\{ \frac{1}{n}\sum_{i=1}^n  \big (1-y_i {\bx}_i^T(\bw\odot \bw - \bv \odot \bv) \big ) \mu_i- d_{\gamma}(\bmu) \right\}.
\end{align}
Since $d_{\gamma}(\bmu)$ is strongly convex, $\mu_i$ can be obtained by setting the gradient of the objective function in \eqref{eqn:231} to zero and has the explicit form:
\begin{align}\label{muupdate}
	\mu_i=\text{median}\left(0,\frac{1-y_i{\bx_i}^T(\bw\odot \bw - \bv \odot \bv)}{\gamma n},1 \right ).
\end{align}
For each sample point $\cZ^i=\{(\bx_i,y_i)\}$, $i\in[n]$, we use $\cE_{\cZ^i}(\bw,\bv)$ and $\cE^*_{\cZ^i,\gamma}(\bw,\bv)$ to denote its hinge loss and the corresponding smoothed approximation, respectively. With different choices of $\mu_i$ for any $i\in [n]$ in \eqref{muupdate}, the explicit form of $\cE^*_{\cZ^i,\gamma}(\bw,\bv)$ can be written as
\begin{align}
        \cE^*_{\cZ^i,\gamma}(\bw,\bv)=
\begin{cases}0,       & \text{if}\ \ y_i\bx^T_i(\bw\odot\bw-\bv\odot\bv)>1,\\
(1-y_i\bx^T_i(\bw\odot\bw-\bv\odot\bv))/n-\gamma/2, & \text{if}\ \ y_i\bx^T_i(\bw\odot\bw-\bv\odot\bv)<1-\gamma n, \\
(1-y_i\bx^T_i(\bw\odot\bw-\bv\odot\bv))^2/(2\gamma n^2),    & \text{otherwise}\label{smooth}.
\end{cases}
\end{align}
Note that the explicit solution \eqref{smooth} indicates that a larger $\gamma$ yields a smoother $\cE^*_{\cZ^i,\gamma}(\bw,\bv)$ with larger approximation error, and can be considered as a parameter that controls the trade-off between smoothness and approximation accuracy.
The following theorem provides the theoretical guarantee of the approximation error. The proof can be directly derived from \eqref{smooth}, and thus is omitted here. 
\begin{theorem}\label{thm1}
For any random sample ${\cal Z}^i=\{(\bx_i,y_i)\}$, $i\in[n]$, the corresponding hinge loss $\cE_{\cZ^i}(\bw,\bv)$ is bounded by its smooth approximation $\cE^*_{\cZ^i,\gamma}(\bw,\bv)$, and the approximation error is completely controlled by the smooth parameter $\gamma$. For any $(\bw,\bv)$, we have
\begin{align*}
\cE_{{\cZ}^i, \gamma}^*(\bw, \bv)\le\cE_{{\cZ}^i}(\bw, \bv)\le {\cE}_{{\cZ}^i, \gamma}^*(\bw,\bv)+ \frac{\gamma}{2}.
\end{align*}
\end{theorem}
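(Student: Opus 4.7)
The plan is to prove both inequalities by a direct case analysis matching the three branches of the piecewise formula for $\cE^*_{\cZ^i,\gamma}(\bw,\bv)$ displayed in (5). It will be convenient to set $a_i := 1 - y_i\bx_i^T(\bw\odot\bw - \bv\odot\bv)$, so that the single-sample hinge loss is $\cE_{\cZ^i}(\bw,\bv) = (a_i)_+/n$, and the three branches in (5) correspond exactly to the cases $a_i<0$, $a_i>\gamma n$, and $0\le a_i\le \gamma n$. Under this shorthand the claimed sandwich becomes a statement about the scalar quantity $a_i$, and the proof amounts to checking each regime separately.

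The first two regimes are essentially free. When $a_i<0$ (equivalently $y_i\bx_i^T(\bw\odot\bw-\bv\odot\bv)>1$), both $\cE_{\cZ^i}(\bw,\bv)$ and $\cE^*_{\cZ^i,\gamma}(\bw,\bv)$ vanish, so the two inequalities are trivial. When $a_i>\gamma n$, the hinge loss equals $a_i/n$ while the smoothed loss equals $a_i/n - \gamma/2$, so the upper bound holds as an equality $\cE_{\cZ^i}(\bw,\bv) = \cE^*_{\cZ^i,\gamma}(\bw,\bv) + \gamma/2$ and the lower bound $-\gamma/2\le 0$ is immediate.

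The only regime requiring a short calculation is the middle branch $0\le a_i\le \gamma n$, in which $\cE_{\cZ^i}(\bw,\bv) = a_i/n$ and $\cE^*_{\cZ^i,\gamma}(\bw,\bv) = a_i^2/(2\gamma n^2)$. The lower bound reduces to $a_i^2 \le 2\gamma n\, a_i$, which follows from $a_i \le \gamma n \le 2\gamma n$. For the upper bound $a_i/n - a_i^2/(2\gamma n^2) \le \gamma/2$, I would multiply through by $2\gamma n^2$ and rearrange to the perfect-square identity $(a_i - \gamma n)^2 \ge 0$, which always holds.

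There is no genuine obstacle here; the theorem is a bookkeeping statement about the prox-function smoothing, and the only nontrivial step is the completion of squares in the middle case. The bound $\gamma/2$ on the approximation error is tight in the second regime and is attained in the middle regime exactly when $a_i = \gamma n$, which is a useful sanity check that the constant cannot be improved.
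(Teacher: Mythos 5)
Your proof is correct and is precisely the argument the paper has in mind when it states that the result ``can be directly derived from \eqref{smooth}'' and omits the details: a three-case check against the branches of the smoothed loss, with the only nontrivial step being the completion of the square in the middle regime $0\le a_i\le\gamma n$. Your added observation that the $\gamma/2$ gap is attained (identically in the regime $a_i>\gamma n$, and at the boundary $a_i=\gamma n$ of the middle regime) is a correct and worthwhile sanity check on the sharpness of the constant.
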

\subsection{Implicit regularization via gradient descent}

In this section, we apply gradient descent algorithm to $\cE_{\cZ^n,\gamma}(\bw,\bv)$ in \eqref{eqn:231} by updating $\bw$ and $\bv$ to obtain the estimator of $\bbeta^*$. Specifically, the gradients of \eqref{eqn:231} with respect to $\bw$ and $\bv$ can be directly obtained, with the form of $- 2/n\sum_{i=1}^n y_i \mu_i {\bx}_i \odot \bw$ and $2/n\sum_{i=1}^n y_i \mu_i {\bx}_i \odot \bv$, respectively. Thus, the updates for $\bw$ and $\bv$ are given as
\begin{align}\label{gradient}
{\bw}_{t+1}={\bw}_t+ 2\eta	\frac{1}{n}\sum_{i=1}^n   y_i \mu_{t,i} {\bx}_i \odot {\bw}_t \quad and \quad {\bv}_{t+1}={\bv}_t- 2\eta \frac{1}{n}\sum_{i=1}^n y_i \mu_{t,i}  {\bx}_i \odot {\bv}_t,
\end{align}
where $\eta$ denotes the step size. Once $(\bw_{t+1}, \bv_{t+1})$ is obtained, we can update $\bbeta$ as $\bbeta_{t+1}=\bw_{t+1}\odot\bw_{t+1} - {\bv}_{t+1}\odot{\bv}_{t+1}$ via the over-parameterization of $\bbeta$. Note that we cannot initialize the values of $\bw_0$ and $\bv_0$ as zero vectors because these vectors are stationary points of the algorithm. Given the sparsity of the true parameter $\bbeta^*$ with the support $S$, ideally, $\bw$ and $\bv$ should be initialized with the same sparsity pattern as $\bbeta^*$, with $\bw_S$ and $\bv_S$ being non-zero and the values outside the support $S$ being zero. However, such initialization is infeasible as $S$ is unknown. As an alternative, we initialize $\bw_0$ and $\bv_0$ as $\bw_0=\bv_0=\alpha\bm1_{p\times 1}$, where $\alpha>0$ is a small constant. This initialization approach strikes a balance: it aligns with the sparsity assumption by keeping the zero component close to zero, while ensuring that the non-zero component begins with a non-zero value \cite{fan2022understanding}.


We summarize the details of the proposed gradient descent method for high-dimensional sparse SVM in the following Algorithm \ref{algorithm}.

\begin{algorithm}[H]\label{algorithm}
    \vspace{0.1cm}
    {\bf Algorithm 1}: Gradient Descent Algorithm for High-Dimensional Sparse SVM.
	\vspace{0.1cm}
	\hrule
	\vspace{0.1cm}
	{\bf Given}: Training set ${\cal Z}^n$, initial value $\alpha$, step size $\eta$, smoothness parameter $\gamma$, maximum iteration number $T_1$, validation set $\widetilde{\cal Z}^{n}$.
	
	{\bf Initialize}: $\bw_0= \alpha\bm1_{p\times1} $, $\bv_0=\alpha\bm_1{p\times1}$,  and set iteration index $t=0$.\\
	{\bf While} $t< T_1$, {\bf do}
	\vspace*{-0.5cm}
	\begin{eqnarray*}
	{\bw}_{t+1}&&= {\bw}_t+ 2\eta	\frac{1}{n}\sum_{i=1}^n   y_i \mu_{t,i} {\bx}_i \odot {\bw}_t;\\ 
	{\bv}_{t+1}&&= {\bv}_t- 2\eta \frac{1}{n}\sum_{i=1}^n y_i \mu_{t,i}  {\bx}_i \odot {\bv}_t;\\
	\bbeta_{t+1}&&= {\bw}_{t+1}\odot{\bw}_{t+1} - {\bv}_{t+1}\odot{\bv}_{t+1}; \\
		\mu_{t+1,i}&&= \text{median}\left(0,\frac{1-y_i{\bx_i}^T	\bbeta_{t+1}}{n\gamma},1 \right );\\
		t&&= t+1.
	\end{eqnarray*}
{\bf End\ if} $t>T_1$ or $\bmu_{t}=\bm0$. \\
	{\bf Return} Set $\widehat{\bbeta}$ as $\bbeta_{t}$.
\end{algorithm}


We highlight three key advantages of Algorithm \ref{algorithm}. First, the stopping condition for Algorithm \ref{algorithm} can be determined based on the value of $\bmu$ in addition to the preset maximum iteration number $T_1$. Specifically, when the values of $\mu_i$ are $0$ across all samples, the algorithm naturally stops as no further updates are made. Thus, $\bmu$ serves as an intrinsic indicator for convergence, providing a more efficient stopping condition. Second, Algorithm \ref{algorithm} avoids heavy computational cost like the computation of the Hessian matrix. The main computational load comes from the vector multiplication in \eqref{gradient}. Since a considerable portion of the elements in $\bmu$ are either $0$ or $1$, and the proportion of these elements increases substantially as $\gamma$ decreases, the computation in \eqref{gradient} can be further simplified. Lastly, the utilization of Nesterov's smoothing not only optimizes our approach but also aids in our theoretical derivations, as detailed in Appendix \ref{app D}.


\section{Theoretical Analysis}\label{theory}
In this section, we analyze the theoretical properties of Algorithm \ref{algorithm}. The main result is the error bound $\|\bbeta_t-\bbeta^*\|$, where $\bbeta^*$ is the minimizer of the population hinge loss function for $\bbeta$ without the $\ell_1$ norm: $\bbeta^*=\arg\min_{\bbeta\in\RR^p}{\mathbb E}(1-y\bx^T\bbeta)_+$. We start by defining the $\delta$-incoherence, a key assumption for our analysis.
\begin{mydef}
Let $\bX\in\RR^{n\times p}$ be a matrix with $\ell_2$-normalized columns $\bx_1,\ldots,\bx_p$, i.e., $\Arrowvert\bx_i\Arrowvert=1$ for all $i \in [n]$. The coherence $\delta=\delta(\bX)$ of the matrix $\bX$ is defined as
$$
\delta:=\max_{K\subseteq[n],1\le i\not=j\le p}|\langle\bx_i\odot\bm1_K,\bx_j\odot\bm1_K\rangle|,
$$
where $\bm1_K$ denotes the $n$-dimensional vector whose $i$-th entry is $1$ if $i\in K$ and $0$ otherwise. Then, the matrix $\bX$ is said to be satisfying $\delta$-incoherence.
\end{mydef}

Coherence measures the suitability of measurement matrices in compressed sensing \cite{foucart2013invitation}. Several techniques exist for constructing matrices with low coherence. One such approach involves utilizing sub-Gaussian matrices that satisfy the low-incoherence property with high probability \cite{donoho2005stable,carin2011coherence}. The Restricted Isometry Property (RIP) is another key measure for ensuring reliable sparse recovery in various applications \cite{vaskevicius2019implicit,zhao2022high}, but verifying RIP for a designed matrix is NP-hard, making it computationally challenging \cite{bandeira2013certifying}. In contrast, coherence offers a computationally feasible metric for sparse regression \cite{donoho2005stable,li2021implicit}. Hence, the assumptions required in our main theorems can be verified within polynomial time, distinguishing them from the RIP assumption.

Recall that $\bbeta^*\in\RR^p$ is the $s$-sparse signal to be recovered. Let $S\subset\{1,\ldots,p\}$ denote the index set that corresponds to the nonzero components of $\bbeta^*$, and the size $|S|$ of $S$ is given by $s$. Among the $s$ nonzero signal components of $\bbeta^*$, we define the index set of strong signals as $S_1=\{i\in S:|\beta^*_{i}|\ge C_s\log p\sqrt{\log p/n}\}$ and of weak signals as $S_2=\{i\in S:|\beta^*_{i}|\le C_w\sqrt{\log p/n}\}$ for some constants $C_s,C_w>0$. We denote $s_1$ and $s_2$ as the cardinalities of $S_1$ and $S_2$, respectively. Furthermore, we use $m=\min_{i\in S_1}|\beta^*_i|$ to represent the minimal strength for strong signals and $\kappa$ to represent the condition number-the ratio of the largest absolute value of strong signals to the smallest. In this paper, we focus on the case that each nonzero signal in $\bbeta^*$ is either strong or weak, which means that $s=s_1+s_2$. Regarding the input data and parameters in Algorithm \ref{algorithm}, we introduce the following two structural assumptions.
\begin{ass}\label{ass1}
The design matrix $\bX/\sqrt{n}$ satisfies $\delta$-incoherence with $0<\delta\lesssim 1/(\kappa s \log p)$. In addition, every entry $x$ of $\bX$ is $i.i.d.$ zero-mean sub-Gaussian random variable with bounded sub-Gaussian norm $\sigma$.
\end{ass}
\begin{ass}\label{ass2}
The initialization for gradient descent are $\bw_0=\alpha\bm 1_{p\times1}$, $\bv_0=\alpha\bm1_{p\times 1}$ where the initialization
size $\alpha$ satisfies $0<\alpha\lesssim 1/p$
, the parameter of prox-function $\gamma$ satisfies $0<\gamma\le 1/n$, and the step size $\eta$ satisfies $0<\eta\lesssim 1/(\kappa\log p)$.
\end{ass}

Assumption \ref{ass1} characterizes the distribution of the input data, which can be easily satisfied across a wide range of distributions. Interestingly, although our proof relies on Assumption \ref{ass1}, numerical results provide compelling evidence that it isn't essential for the success of our method. This indicates that the constraints set by Assumption \ref{ass1} can be relaxed in practical applications, as discussed in Section \ref{sim}. The assumptions about the initialization size $\alpha$, the smoothness parameter $\gamma$, and the step size $\eta$ primarily stem from the theoretical induction of Algorithm \ref{algorithm}. For instance, $\alpha$ controls the strength of the estimated weak signals and error components, $\gamma$ manages the approximation error in smoothing, and $\eta$ affects the accuracy of the estimation of strong signals. Our numerical simulations indicate that extremely small initialization size $\alpha$, step size $\eta$, and smoothness parameter $\gamma$ are not required to achieve the desired convergence results, highlighting the low computational burden of our method, with details found in Section \ref{sim}. The primary theoretical result is summarized in the subsequent theorem.
\begin{theorem}\label{thm2}
    Suppose that Assumptions {\rm\ref{ass1}} and {\rm\ref{ass2}} hold, then there exist positive constants $c_1,c_2,c_3$ and $c_4$ such that there holds with probability at least $1-c_1n^{-1}-c_2p^{-1}$ that, for every time $t$ with $c_3\log(m/\alpha^2)/(\eta m)\le t\le c_4\log(1/\alpha)/(\eta\log n)$, the solution of the $t$-th iteration in Algorithm {\rm\ref{algorithm}}, $\bbeta_t=\bw_t\odot\bw_t-\bv_t\odot\bv_t$, satisfies
\begin{align*}
    \Arrowvert\bbeta_{t}-\bbeta^*\Arrowvert^2\lesssim \frac{s\log p}{n}.
\end{align*}
\end{theorem}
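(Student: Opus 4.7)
The plan is to analyze the multiplicative coordinate-wise dynamics induced by the over-parameterization and to track the iterates separately on the three groups $S_1$, $S_2$, and $S^c$. Writing the gradient update coordinate-wise, $w_{t+1,j} = (1 + 2\eta g_{t,j})\, w_{t,j}$ and $v_{t+1,j} = (1 - 2\eta g_{t,j})\, v_{t,j}$ with $g_{t,j} = n^{-1}\sum_i y_i \mu_{t,i} x_{ij}$, so the evolution is purely multiplicative at each coordinate and only coupled across $j$ through $\bmu_t$, which itself depends on the global fit $\bbeta_t = \bw_t \odot \bw_t - \bv_t \odot \bv_t$. The strategy is standard for this line of work (cf. \cite{vaskevicius2019implicit,zhao2022high,fan2022understanding}): decompose $g_{t,j}$ into a ``population'' drift $\tilde g_j(\bbeta_t)$ that pulls $\beta_{t,j}$ toward $\beta_j^*$ and a noise term $\xi_{t,j}$ controlled by incoherence and sub-Gaussian concentration, then maintain by induction on $t$ the invariants that $\beta_{t,j}$ is close to $\beta_j^*$ on $S_1$, stays $O(\sqrt{\log p / n})$ on $S_2$, and stays $O(\alpha^2)$ on $S^c$.

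First I would construct the high-probability event of mass at least $1 - c_1 n^{-1} - c_2 p^{-1}$. Assumption \ref{ass1} plus sub-Gaussian tail bounds gives uniform control $\sup_{\bmu \in [0,1]^n}\|n^{-1}\sum_i y_i \mu_i \bx_i - \EE[\,\cdot\,]\|_\infty \lesssim \sqrt{\log p / n}$ along with the column-norm and row-$\ell_\infty$ bounds needed later. Combining this with the $\delta$-incoherence assumption, I would show that cross-coordinate interference in $g_{t,j}$ is bounded by $\delta \cdot s \cdot \|\bbeta_t\|_\infty \lesssim (\kappa \log p)^{-1} \|\bbeta_t\|_\infty$, which is absorbed into the drift for strong signals. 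On top of this event, using the first-order optimality $\EE \tilde g_j(\bbeta^*) = 0$ from the population hinge-loss minimization and the strong convexity/Lipschitzness provided by Nesterov's smoothing (Theorem \ref{thm1}), I would derive that the signed drift $\tilde g_j(\bbeta_t)$ points in the direction of $\beta_j^* - \beta_{t,j}$ with magnitude $\Theta(|\beta_j^* - \beta_{t,j}|)$ up to lower-order noise.

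The heart of the proof is a simultaneous induction over iterations. On $S_1$, assuming WLOG $\beta_j^* > 0$, $w_{t,j}^2$ grows at a multiplicative rate close to $(1 + 2\eta m)^2$ while $v_{t,j}^2$ contracts at rate $(1 - 2\eta m)^2$; so after $t \gtrsim \log(m/\alpha^2)/(\eta m)$ iterations the strong signals are recovered up to additive error $O(\sqrt{\log p/n})$ per coordinate, contributing $s_1 \cdot \log p/n$ to $\|\bbeta_t - \bbeta^*\|^2$. On $S_2$ and $S^c$, the bound $\max(w_{t,j}^2, v_{t,j}^2) \le \alpha^2 \exp(4\eta t\, \|\bm g\|_\infty)$ together with $\|\bm g\|_\infty \lesssim \sqrt{\log p/n}$ and the upper time window $t \le c_4 \log(1/\alpha)/(\eta \log n)$ keeps the weak coordinates at size $O(\sqrt{\log p/n})$ (matching the weak-signal definition) and the null coordinates at $O(\alpha^2)$, contributing $s_2 \cdot \log p / n$ and $p \cdot \alpha^4$ respectively. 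Because $\alpha \lesssim 1/p$, the null contribution is negligible, and summing gives the desired bound $\|\bbeta_t - \bbeta^*\|^2 \lesssim s \log p / n$.

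The main obstacle I expect is decoupling the trajectory of $\bmu_t$ from that of $\bbeta_t$: the weights $\mu_{t,i}$ change from iteration to iteration and couple every coordinate of $g_{t,j}$ through the global residual $1 - y_i \bx_i^T \bbeta_t$, which is absent in the regression analyses of \cite{vaskevicius2019implicit,zhao2022high}. I would handle this by leveraging the explicit piecewise form \eqref{smooth} of the smoothed loss to show that $\bmu_t$ is Lipschitz in $\bbeta_t$ with controlled modulus (through $\gamma$), so that on the induction-good trajectory $\|\tilde g(\bbeta_t) - \tilde g(\bbeta^*)\|_\infty$ can be bounded linearly in $\|\bbeta_t - \bbeta^*\|$ using the incoherence-based decoupling; this is where the assumption $\gamma \le 1/n$ is used crucially. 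A secondary difficulty is showing that the inductive invariants on $S_2, S^c$ do not break before $t$ reaches the lower time threshold for $S_1$; this is precisely why the two-sided window $[c_3 \log(m/\alpha^2)/(\eta m),\, c_4 \log(1/\alpha)/(\eta \log n)]$ is non-empty under the stated scalings of $\alpha$, $\eta$, and $\gamma$.
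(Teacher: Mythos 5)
Your overall architecture matches the paper's: multiplicative coordinate-wise dynamics, separate treatment of strong signals versus the rest, an induction maintaining invariants over a two-sided time window, and a final $\ell_2$ accounting of $s_1\log p/n$ from the strong coordinates, $s_2\log p/n$ from the weak ones, and a negligible $p\cdot\mathrm{poly}(\alpha)$ term from the nulls. Two of your choices differ only cosmetically from the paper: the paper keeps the iterates on $S_2$ pinned near zero and charges $\|\bbeta^*\odot\bm1_{S_2}\|^2\le C_w^2 s_2\log p/n$ as unrecovered signal (rather than arguing the iterates track the weak signals), and it obtains the toward-$\beta^*$ drift not from population first-order optimality but from the explicit identity $G_{t,i}\approx -(\beta_{t,i}-\beta_i^*)/(\gamma n)$, which follows from the residual form $\mu_{t,k}=(1-y_k\bx_k^T\bbeta_t)/(\gamma n)$ on the middle regime together with Lemma~\ref{lem2}; this is where $\gamma\le 1/n$ actually enters.

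The genuine gap is in your concentration step for the null and weak coordinates. You invoke $\sup_{\bmu\in[0,1]^n}\|n^{-1}\sum_i y_i\mu_i\bx_i-\EE[\cdot]\|_\infty\lesssim\sqrt{\log p/n}$ and then bound the null coordinates by $\alpha^2\exp(4\eta t\|\bm g\|_\infty)$. That uniform bound is false: taking the supremum over the whole cube, one may choose $\mu_i=\bm1\{y_i x_{ij}>0\}$ separately for each $j$, which makes the $j$-th coordinate of order $n^{-1}\sum_i|x_{ij}|=\Theta(1)$, not $O(\sqrt{\log p/n})$. Since $\bmu_t$ is data-dependent through $\bbeta_t$, you cannot apply Hoeffding with $\bmu_t$ frozen either. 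The paper sidesteps this with the crude deterministic bound $|G_{t,i}|\le n^{-1}\sum_k|x_{ki}|\lesssim\sigma\sqrt{\log n}$, and it is precisely this $\Theta(\sqrt{\log n})$ (rather than $\sqrt{\log p/n}$) per-step growth that forces the upper end of the time window to be $\log(1/\alpha)/(\eta\log n)$ and yields only $w_{t,i},v_{t,i}\le\sqrt{\alpha}$ (hence $|\beta_{t,i}|=O(\alpha)$, not your claimed $O(\alpha^2)$) on $S_1^c$; the null contribution $p\cdot\alpha^2\lesssim 1/p$ is still negligible under $\alpha\lesssim 1/p$. To repair your argument you must either adopt the paper's crude bound (and accept the shorter window and the weaker $O(\alpha)$ invariant) or prove a genuinely uniform empirical-process bound over the restricted class of $\bmu$'s realizable along the good trajectory, which your sketch does not supply.
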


Theorem \ref{thm2} demonstrates that if $\bbeta^*$ contains $s$ nonzero signals, then with high probability, for any $t\in[c_3\log(m/\alpha^2)/(\eta m),c_4\log(1/\alpha)/(\eta\log n)]$, the convergence rate of $\cO(\sqrt{s\log p/n})$ in terms of the $\ell_2$ norm can be achieved. Such a convergence rate matches the near-oracle rate of sparse SVMs and can be attained through explicit regularization using the $\ell_1$ norm penalty \cite{peng2016error,zhou2022communication}, as well as through concave penalties \cite{kharoubi2023high}. Therefore, Theorem \ref{thm2} indicates that with over-parameterization, the implicit regularization of gradient descent achieves the same effect as imposing an explicit regularization into the objective function in \eqref{eqn:orign}.


{\bf{Proof Sketch.}} The ideas behind the proof of Theorem \ref{thm2} are as follows. First, we can control the estimated strengths associated with the non-signal and weak signal components, denoted as $\Arrowvert \bw_{t}\odot\bm1_{S_1^c}\Arrowvert_{\infty}$ and $\Arrowvert \bv_{t}\odot\bm1_{S_1^c}\Arrowvert_{\infty}$, to the order of the square root of the initialization size $\alpha$ for up to $\cO(\log(1/\alpha)/(\eta\log n))$ steps. This provides an upper boundary on the stopping time. Also, the magnitude of $\alpha$ determines the size of coordinates outside the signal support $S_1$ at the stopping time. The importance of choosing small initialization sizes and their role in achieving the desired statistical performance are further discussed in Section \ref{sim}. On the other hand, each entry of the strong signal part, denoted as $\bbeta_t\odot\bm1_{S_1}$, increases exponentially with an accuracy of around $\cO(\log p/n)$ near the true parameter $\bbeta^*\odot\bm1_{S_1}$ within roughly $\cO(\log(m/\alpha^2)/(\eta m))$ steps. This establishes the left boundary of the stopping time. The following two Propositions summarize these results.
 \begin{pro}\label{proerror}
(Analyzing Weak Signals and Errors) Under  Assumptions {\rm\ref{ass1}}-{\rm\ref{ass2}}, with probability at least $1-cn^{-1}$, we have
\begin{align*}
\Arrowvert \bw_{t}\odot\bm1_{S_1^c}\Arrowvert_{\infty}\le\sqrt{\alpha}\lesssim \frac{1}{\sqrt{p}}\quad and\quad \Arrowvert \bv_{t}\odot\bm1_{S_1^c}\Arrowvert_{\infty}\le\sqrt{\alpha}\lesssim\frac{1}{\sqrt{p}},
\end{align*}
 for all $t\le T^*=\cO(\log(1/\alpha)/(\eta\log n))$, where $c$ is some positive constant.
\end{pro}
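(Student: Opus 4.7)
The plan is to exploit the purely multiplicative nature of the coordinate-wise updates and reduce the claim to a uniform bound on a single per-coordinate gradient sum. Reading the updates in Algorithm \ref{algorithm} coordinate-by-coordinate, for each $j\in[p]$ and $t\ge 0$,
\begin{align*}
w_{t+1,j}=w_{t,j}\bigl(1+2\eta\,g_{t,j}\bigr),\qquad v_{t+1,j}=v_{t,j}\bigl(1-2\eta\,g_{t,j}\bigr),
\end{align*}
where $g_{t,j}:=n^{-1}\sum_{i=1}^n y_i\mu_{t,i}x_{i,j}$. Since $w_{0,j}=v_{0,j}=\alpha$ and the step size $\eta\lesssim 1/(\kappa\log p)$ keeps each multiplier in $(0,2)$, iterating gives
\begin{align*}
0\le w_{t,j}\le\alpha\exp\!\Bigl(2\eta\sum_{s=0}^{t-1}|g_{s,j}|\Bigr),\qquad 0\le v_{t,j}\le\alpha\exp\!\Bigl(2\eta\sum_{s=0}^{t-1}|g_{s,j}|\Bigr).
\end{align*}
Hence it suffices to show that, with probability at least $1-cn^{-1}$, there is a uniform bound $\max_{0\le s<T^*,\,j\in S_1^c}|g_{s,j}|\le G$ with $G\lesssim\log n$. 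The stopping time $T^*=c_4\log(1/\alpha)/(\eta\log n)$ then makes the exponent at most $\tfrac{1}{2}\log(1/\alpha)$, yielding $|w_{t,j}|,|v_{t,j}|\le\sqrt{\alpha}$ for every $t\le T^*$ and $j\in S_1^c$. Combined with $\alpha\lesssim 1/p$ from Assumption \ref{ass2}, this is exactly the claim.

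The substantive step is the uniform control on $|g_{s,j}|$. The cleanest route uses only $\mu_{s,i}\in[0,1]$ and $|y_i|=1$ (visible from \eqref{muupdate}), which gives the data-only estimate $|g_{s,j}|\le n^{-1}\sum_i|x_{i,j}|$. Under Assumption \ref{ass1} each $x_{i,j}$ is centered sub-Gaussian with norm $\sigma$, so $|x_{i,j}|$ is sub-exponential with mean $\lesssim\sigma$; Bernstein's inequality gives $n^{-1}\sum_i|x_{i,j}|\le C$ with probability at least $1-e^{-c'n}$, and a union bound over $j\in[p]$ produces the required uniform $O(1)$ bound on a $(1-cn^{-1})$-event, independently of $s$. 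If a sharper rate is needed downstream, I would instead split $g_{s,j}=g^{*}_{j}+(g_{s,j}-g^{*}_{j})$, where $g^{*}_{j}$ freezes $\bmu$ at its value at $\bbeta^{*}$: the first piece has mean zero at non-support coordinates by optimality of $\bbeta^{*}$ and obeys the standard $\sqrt{\log p/n}$ sub-Gaussian rate, while the perturbation is controlled via the $\delta$-incoherence of Assumption \ref{ass1} together with the Lipschitz dependence of $\mu_{s,i}$ on $\bx_i^{T}\bbeta_s$ visible from \eqref{muupdate}.

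\textbf{Main obstacle.} The delicate point is that $\mu_{s,i}$ is a nonlinear function of both $\bbeta_s$ and the data $\bx_i$, so $g_{s,j}$ is not a sum of independent terms conditional on the design, and the bound must hold uniformly in $s\le T^{*}$. The deterministic estimate $|g_{s,j}|\le n^{-1}\sum_i|x_{i,j}|$ bypasses this completely because it is iteration-independent; $T^{*}$ enters only through the union bound over $j$, not over $s$. The linearization $\log(1+2\eta|g_{s,j}|)\le 2\eta|g_{s,j}|$, valid since $2\eta C<1$ under Assumption \ref{ass2}, is what converts the multiplicative accumulation into the clean additive exponent above; without it the per-step amplification could compound much faster. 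The use of incoherence in this particular proposition is only indirect: it becomes genuinely essential in the companion strong-signal analysis, where cross-talk from the $s_1$ strong coordinates into the off-support gradient would otherwise contaminate the bound as $\bbeta_t$ approaches $\bbeta^{*}$.
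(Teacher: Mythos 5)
Your argument is correct and follows essentially the same route as the paper's proof: both reduce to the iteration-independent deterministic bound $|G_{t,j}|\le n^{-1}\sum_{i}|x_{ij}|$ (using only $\mu_{t,i}\in[0,1]$ and $|y_i|=1$), control this quantity uniformly over $j\in S_1^c$ by a concentration inequality plus a union bound over coordinates, and then convert the multiplicative accumulation into $\alpha\exp(2\eta M T^*)\le\sqrt{\alpha}$ via $\log(1+x)\le x$ and the definition of $T^*$. The only cosmetic differences are that the paper dresses the computation up as an induction and invokes Hoeffding to get $M\lesssim\sigma\sqrt{\log n}$, whereas you apply Bernstein to the sub-exponential variables $|x_{ij}|$ to get an $O(1)$ bound; both yield the same conclusion with an appropriate choice of the constant in $T^*$.
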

\begin{pro}\label{prostrong}
(Analyzing Strong Signals) Under Assumptions {\rm\ref{ass1}}-{\rm\ref{ass2}}, with probability at least $1-c_1n^{-1}-c_2p^{-1}$, we have 
\begin{align*}
\Arrowvert\bbeta_t\odot\bm1_{S_1}-\bbeta^*\odot\bm1_{S_1}\Arrowvert_{\infty}\lesssim \sqrt{\frac{\log p}{n}},
\end{align*}
holds for all $\cO(\log(m/\alpha^2)/(\eta m))\le t\le\cO(\log(1/\alpha)/(\eta\log n))$ where $c_1, c_2$ are two constants.
\end{pro}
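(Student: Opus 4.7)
My plan is to analyze the coordinate-wise multiplicative dynamics on $S_1$ given by
\begin{align*}
w_{t+1,j} = w_{t,j}(1+2\eta g_{t,j}), \qquad v_{t+1,j} = v_{t,j}(1-2\eta g_{t,j}), \qquad g_{t,j} := \tfrac{1}{n}\sum_{i=1}^n y_i \mu_{t,i} x_{ij},
\end{align*}
and to split the trajectory into an exponential growth phase that ends around $t_0\asymp\log(m/\alpha^2)/(\eta m)$, followed by a stable phase that persists up to $T^*\asymp\log(1/\alpha)/(\eta\log n)$. Without loss of generality I take $\beta^*_j>0$ for $j\in S_1$; the signs of $w_{t,j},v_{t,j}$ are preserved throughout because $\eta$ is small enough under Assumption \ref{ass2} to keep both multiplicative factors positive.

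The first key step is a signal/noise decomposition of the gradient
\begin{align*}
g_{t,j} = \phi_{t,j}\,(\beta^*_j - \beta_{t,j}) + \xi_{t,j},
\end{align*}
where $\phi_{t,j}>0$ is bounded above and below by absolute constants (reflecting the local "curvature" of the smoothed hinge near the margin under sub-Gaussian covariates), and $|\xi_{t,j}|\lesssim \sqrt{\log p/n}$ uniformly over $t\le T^*$ and $j\in S_1$ with probability at least $1-c_1 n^{-1}-c_2 p^{-1}$. To establish it, I would use $\delta$-incoherence and sub-Gaussianity from Assumption \ref{ass1} to decouple coordinate $j$ from the cross coordinates, and invoke Proposition \ref{proerror} which caps $\|\bw_t\odot\bm1_{S_1^c}\|_\infty,\|\bv_t\odot\bm1_{S_1^c}\|_\infty\le\sqrt{\alpha}\lesssim 1/\sqrt{p}$ so that the contribution of off-support coordinates to $g_{t,j}$ is negligible. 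Uniformity over $t$ is the main obstacle, because $\mu_{t,i}$ depends on the entire iterate $\bbeta_t$ and hence on the random sample; I would handle it by a covering argument over the feasible neighborhood to which Proposition \ref{proerror} confines $\bbeta_t$, combined with a Bernstein-type concentration bound for sub-Gaussian averages. Nesterov's smoothing is crucial here, since it lets me differentiate through the loss and avoid a sub-gradient argument.

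Granted the decomposition, the growth phase analysis follows the standard Hadamard-parameterization template: while $\beta_{t,j}\ll\beta^*_j$ the signal term dominates, so $w_{t,j}$ grows by at least a factor $(1+c\eta\beta^*_j)$ per step, reaching $\Theta(\sqrt{\beta^*_j})$ after $\cO(\log(\beta^*_j/\alpha^2)/(\eta\beta^*_j))$ iterations; by the condition-number bound in Assumption \ref{ass2} this is $\cO(\log(m/\alpha^2)/(\eta m))$ uniformly in $j\in S_1$. Meanwhile $v_{t,j}$ contracts multiplicatively and remains $\cO(\alpha)$, so $\beta_{t,j}=w_{t,j}^2-v_{t,j}^2$ is dominated by $w_{t,j}^2$. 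For the stable phase I would set up a one-step contraction argument: whenever $|\beta_{t,j}-\beta^*_j|> C\sqrt{\log p/n}$ with $C$ large enough, the signed gradient $g_{t,j}$ has magnitude at least $\tfrac{1}{2}\phi_{t,j}|\beta^*_j-\beta_{t,j}|$ and the correct sign, forcing $|\beta_{t+1,j}-\beta^*_j|$ to shrink geometrically; once inside the ball of radius $\cO(\sqrt{\log p/n})$ around $\beta^*_j$, the iterate cannot escape this trapping region for any $t\le T^*$. Combining the growth-phase arrival time $t_0$ with the stable-phase trapping yields the $\ell_\infty$ bound claimed in Proposition \ref{prostrong}.
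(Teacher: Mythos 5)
Your plan reproduces the paper's argument in all essentials: the same two-phase analysis of the multiplicative Hadamard dynamics, the same decomposition of the gradient into a restoring term proportional to $\beta^*_j-\beta_{t,j}$ plus a stochastic term controlled by Hoeffding and a cross-coordinate term controlled by $\delta$-incoherence (the paper's terms $(I_1)$ and $(I_2)$), and the same use of Proposition \ref{proerror} to neutralize the off-support coordinates. Two points of divergence are worth recording. First, in the paper the ``curvature'' in front of $\beta^*_j-\beta_{t,j}$ is not an absolute constant but $1/(\gamma n)$: it comes entirely from the samples in the quadratic zone of the Nesterov-smoothed loss, where $\mu_{t,k}=(1-y_k\bx_k^T\bbeta_t)/(\gamma n)$ is linear in $\bbeta_t$; under $\gamma\le 1/n$ this only accelerates the growth phase, so your arrival time $\cO(\log(m/\alpha^2)/(\eta m))$ is still recovered, but your stable-phase one-step contraction would need to be verified with this $\gamma$-dependent (possibly large) curvature rather than an $O(1)$ one, since overshoot is governed by $\eta/(\gamma n)$ rather than by $\eta$ alone. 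Second, the paper's second stage is not a contraction-to-a-trapping-region argument but a dyadic schedule: while $\beta_{t,i}\in[(1-2^{-a})\beta^*_i,(1-2^{-a-1})\beta^*_i]$ the element-wise error $\xi_{t,i}$ refines to $\cO(\eta m/(2^a\gamma n\log p))$, each level costs $\cO(1/(\eta\beta^*_i))$ iterations, and summing over $a\le\lceil\log_2(\beta^*_i/\epsilon)\rceil$ reaches accuracy $\epsilon=C_{\epsilon}\sqrt{\log p/n}$ within $5\log(m/\alpha^2)/(\eta m)$ steps. Your route, if carried out, would actually be stronger on two points the paper leaves implicit: the proposition asserts the bound for \emph{all} $t$ up to $T^*$, which requires showing the iterate neither overshoots nor escapes (your trapping step), and the concentration bound must hold uniformly over the data-dependent index sets $K_{t,1},K_{t,2}$ across iterations (your covering argument), whereas the paper applies Hoeffding pointwise without addressing this dependence.
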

Consequently, by appropriately selecting the stopping time $t$ within the interval specified in Theorem \ref{thm2}, we can ensure convergence of the signal components and effectively control the error components. The final convergence rate can be obtained by combining the results from Proposition \ref{proerror} and Proposition \ref{prostrong}.

\section{Numerical Study}\label{sim}


In our simulations, unless otherwise specified, we follow a default setup. We generate $3n$ independent observations, divided equally for training, validation, and testing. The true parameters $\bbeta^*$ is set to $m\bm1_S$ with a constant $m$. Each entry of $\bx$ is sampled as $i.i.d.$ zero-mean Gaussian random variable, and the labels $y$ are determined by a binomial distribution with probability $p = {1}/({1 + \exp(\bx^T \bbeta^*)})$. Default parameters are: true signal strength $m=10$, number of signals $s=4$, sample size $n=200$, dimension $p=400$, step size $\eta=0.5$, smoothness parameter $\gamma=10^{-4}$, and initialization size $\alpha=10^{-8}$. For evaluation, we measure the estimation error using $\Arrowvert \bbeta_t/\Arrowvert \bbeta_t \Arrowvert - \bbeta^*/\Arrowvert \bbeta^* \Arrowvert \Arrowvert$ (for comparison with oracle estimator) and the prediction accuracy on the testing set with $P(\hat{y}=y_{test})$. Additionally, "False positive" and "True negative" metrics represent variable selection errors. Specifically, "False positive" means the true value is zero but detected as a signal, while "True negative" signifies a non-zero true value that isn't detected. Results are primarily visualized employing shaded plots and boxplots, where the solid line depicts the median of $30$ runs and the shaded area marks the $25$-th and $75$-th percentiles over these runs.

{\bf{Effects of Small Initialization Size.}} We investigate the power of small initialization size $\alpha$ on the performance of our algorithm. We set the initialization size $\alpha=\{10^{-4},10^{-6},10^{-8},10^{-10}\}$, and other parameters are set by default. Figure \ref{fig:initial} shows the importance of small initialization size in inducing exponential paths for the coordinates. Our simulations reveal that small initialization size leads to lower estimation errors and more precise signal recovery, while effectively constraining the error term to a negligible magnitude. Remarkably, although small initialization size might slow the convergence rate slightly, this trade-off is acceptable given the enhanced estimation accuracy.

\begin{figure*}[h]
\centering
\subfigure[Effects of Initialization]{\label{subfig:a}
\includegraphics[width=0.31\textwidth]{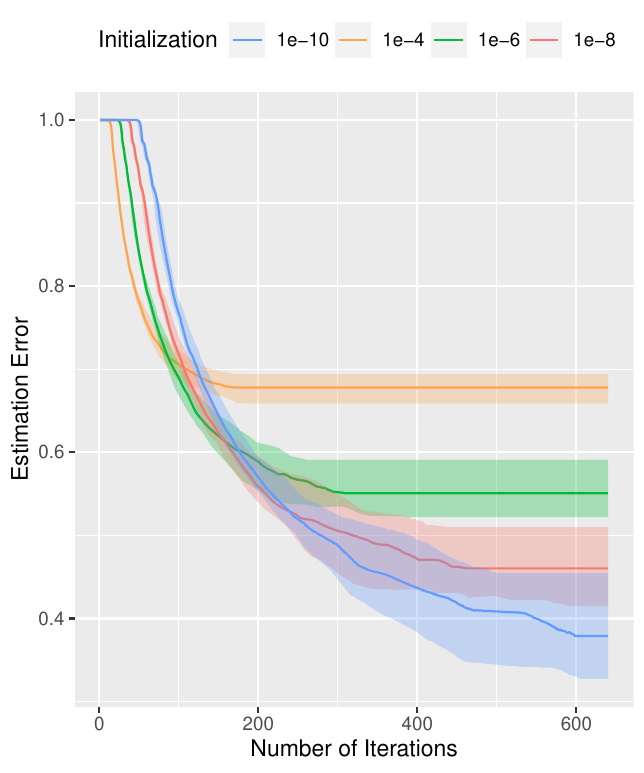}
}
\subfigure[Paths of Signals]{
\includegraphics[width=0.31\textwidth]{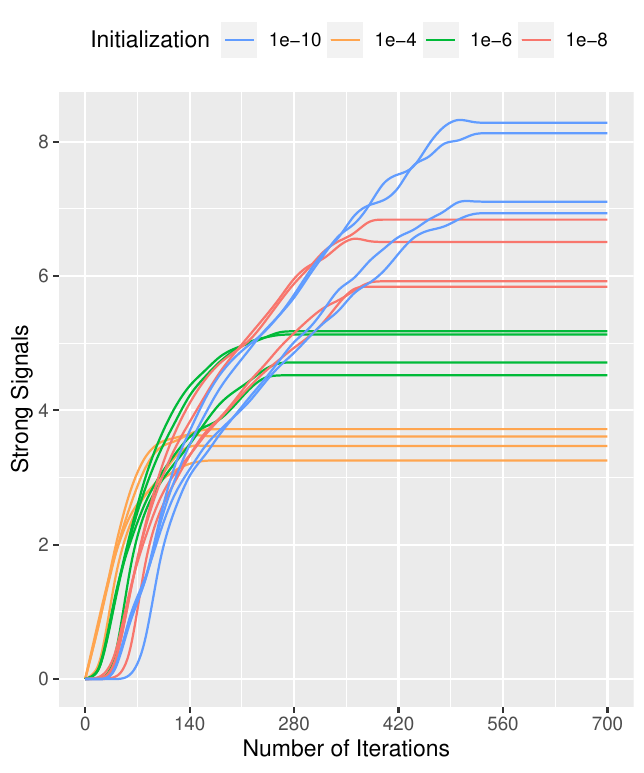}
}
\subfigure[Paths of Errors]{
\includegraphics[width=0.31\textwidth]{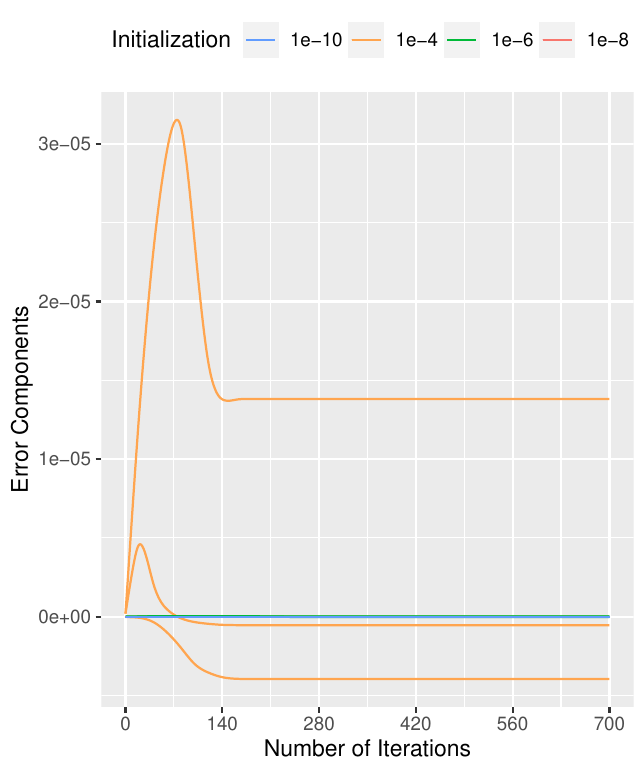}
}
\caption{Effects of small initialization size $\alpha$. In Figure \ref{subfig:a}, the estimation error is calculated by $\Arrowvert\bbeta_t-\bbeta^*\Arrowvert/\Arrowvert\bbeta^*\Arrowvert$.}
\label{fig:initial}
\end{figure*}

{\bf{Effects of Signal Strength and Sample Size.}} We examine the influence of signal strength on the estimation accuracy of our algorithm. We set the true signal strength $m=0.5*k,k=1,\ldots,20$ and keep other parameters at their default values. As depicted in Figure \ref{fig:strenghth}, we compare our method (denoted by {\bf GD}) with $\ell_1$-regularized SVM (denoted by {\bf Lasso} method), and obtain the oracle solution (denoted by {\bf Oracle}) using the true support information. We assess the advantages of our algorithm from three aspects. Firstly, in terms of estimation error, our method consistently outperforms the Lasso method across different signal strengths, approaching near-oracle performance. Secondly, all three methods achieve high prediction accuracy on the testing set. Lastly, when comparing variable selection performance, our method significantly surpasses the Lasso method in terms of false positive error. Since the true negative error of both methods is basically $0$ , we only present results for false positive error in Figure \ref{fig:strenghth}.
\begin{figure*}[h]
\centering
\subfigure[Estimation Result]{\label{signal-est}
\includegraphics[width=0.31\textwidth]{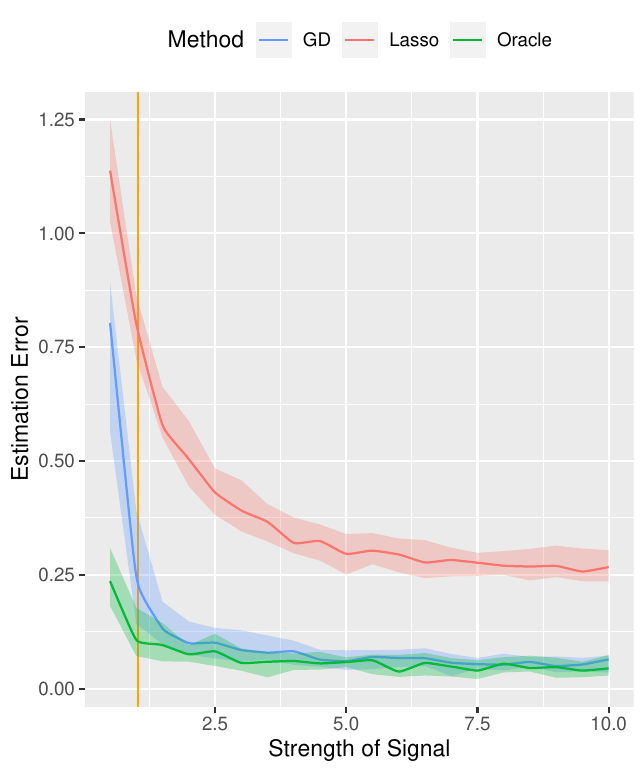}
}
\subfigure[Prediction Result]{
\includegraphics[width=0.31\textwidth]{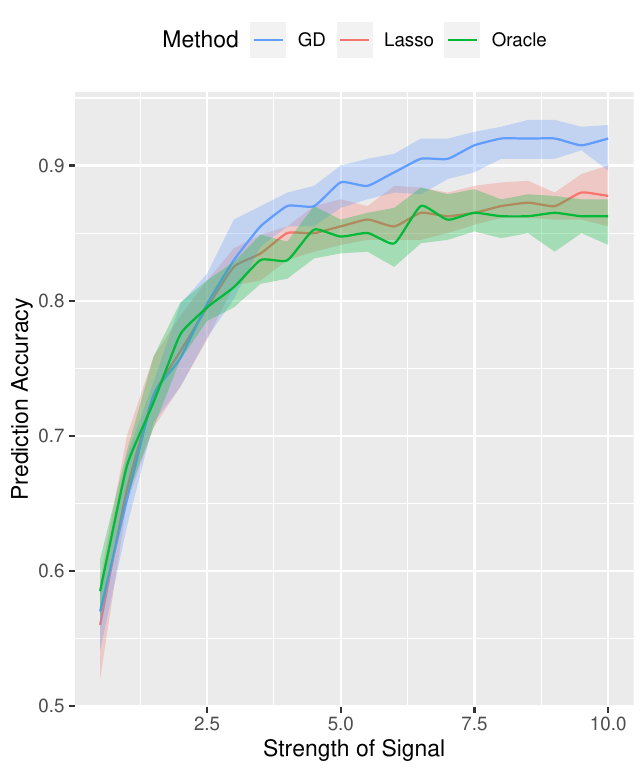}
}
\subfigure[Variable Selection Result]{
\includegraphics[width=0.31\textwidth]{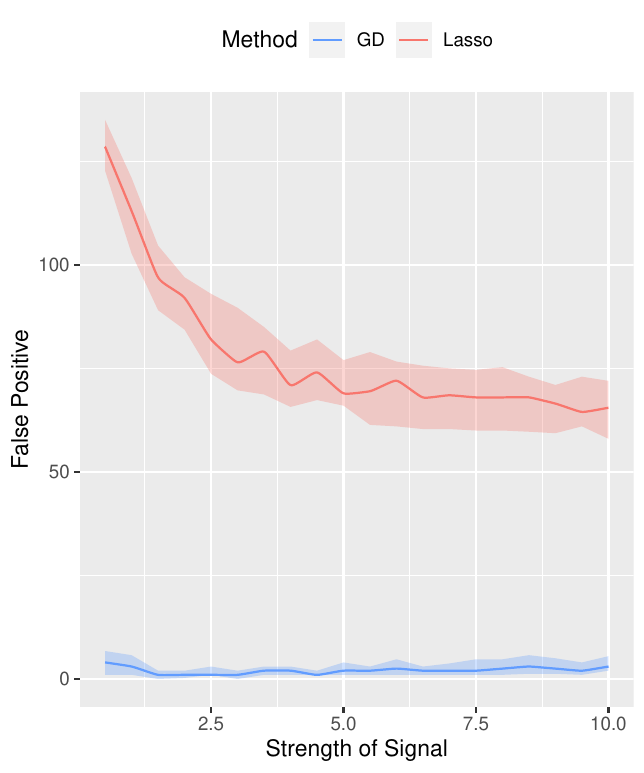}
}
\caption{Effects of signal strength $m$. The orange vertical line in Figure \ref{signal-est} show the threshold of strong signal $\log p\sqrt{\log p/n}$.}
\label{fig:strenghth}
\end{figure*}

We further analyze the impact of sample size $n$ on our proposed algorithm. Keeping the true signal strength fixed at $m=5$, we vary the sample size as $n=50*k$ for $k=1,\ldots,8$. Other parameters remain at their default values. Consistently, our method outperforms the Lasso method in estimation, prediction, and variable selection, see Figure \ref{fig:sample} for a summary of the results.
\begin{figure*}[h]
\centering
\subfigure[Estimation Result]{
\includegraphics[width=0.31\textwidth]{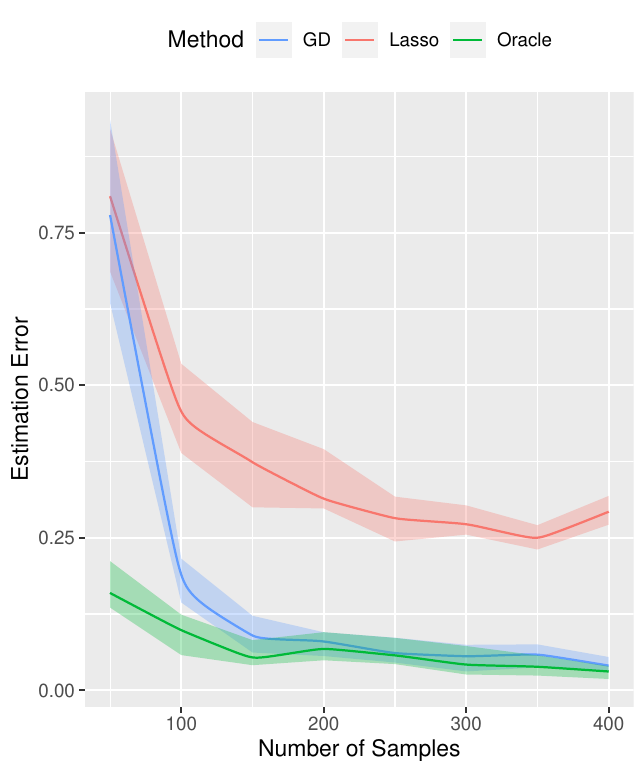}
}
\subfigure[Prediction Result]{
\includegraphics[width=0.31\textwidth]{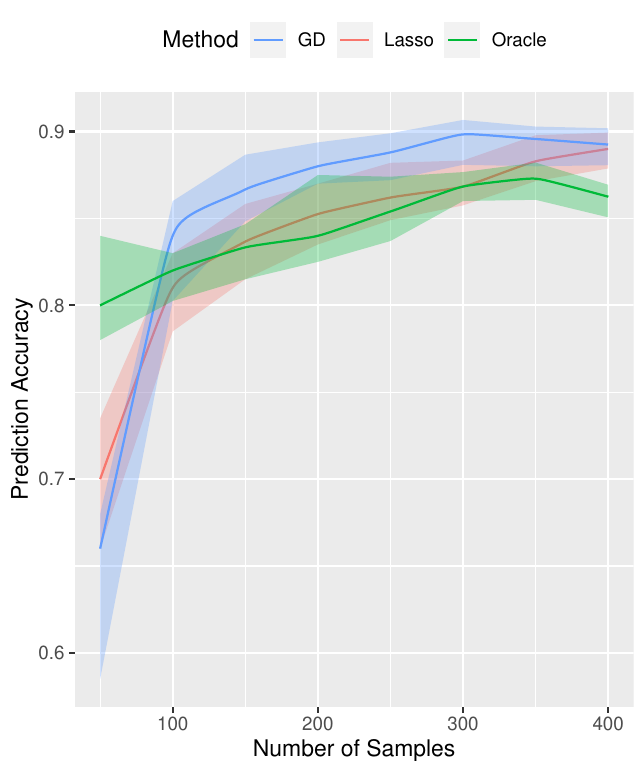}
}
\subfigure[Variable Selection Result]{
\includegraphics[width=0.31\textwidth]{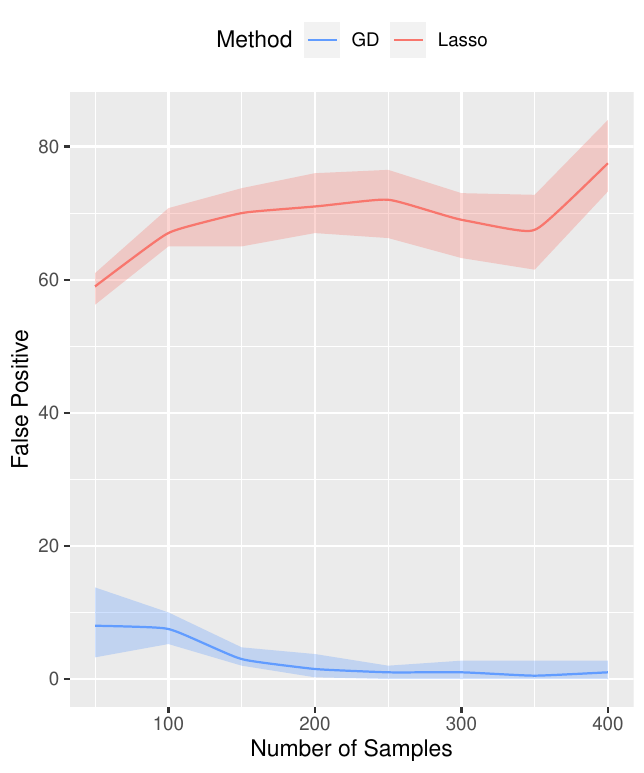}
}
\caption{Effects of sample size $n$.}
\label{fig:sample}
\end{figure*}

{\bf{Performance on Complex Signal Structure.}} To examine the performance of our method under more complex signal structures, we select five signal structures: $\bA- (5,6,7,8)$, $\bB-(4,6,8,9)$, $\bC-(3,6,9,10)$, $\bD-(2,6,10,11)$, and $\bE-(1,6,11,12)$. Other parameters are set by default. The results, summarized in Figure \ref{fig:complex}, highlight the consistent superiority of our method over the Lasso method in terms of prediction and variable selection performance, even approaching an oracle-like performance for complex signal structures. High prediction accuracy is achieved by the both methods.

\begin{figure*}[h]
\centering
\subfigure[Estimation Result]{
\includegraphics[width=0.31\textwidth]{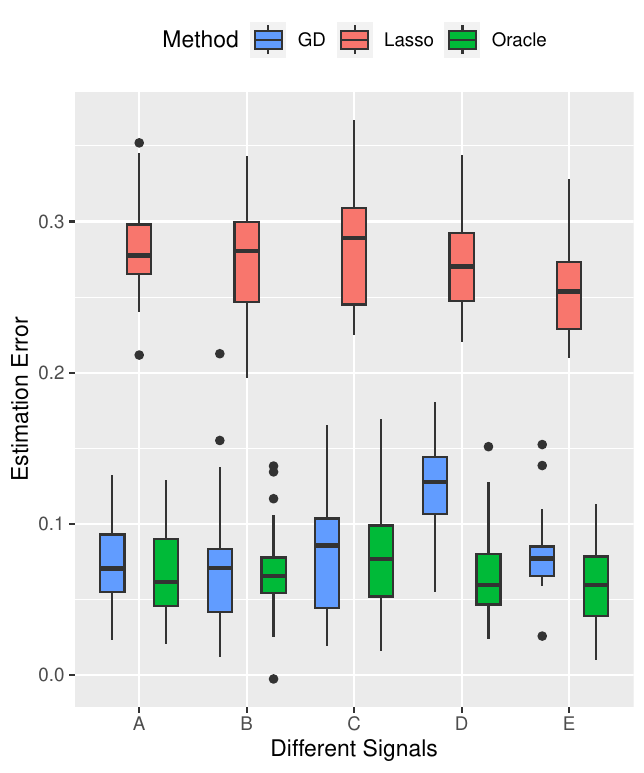}
}
\subfigure[Prediction Result]{
\includegraphics[width=0.31\textwidth]{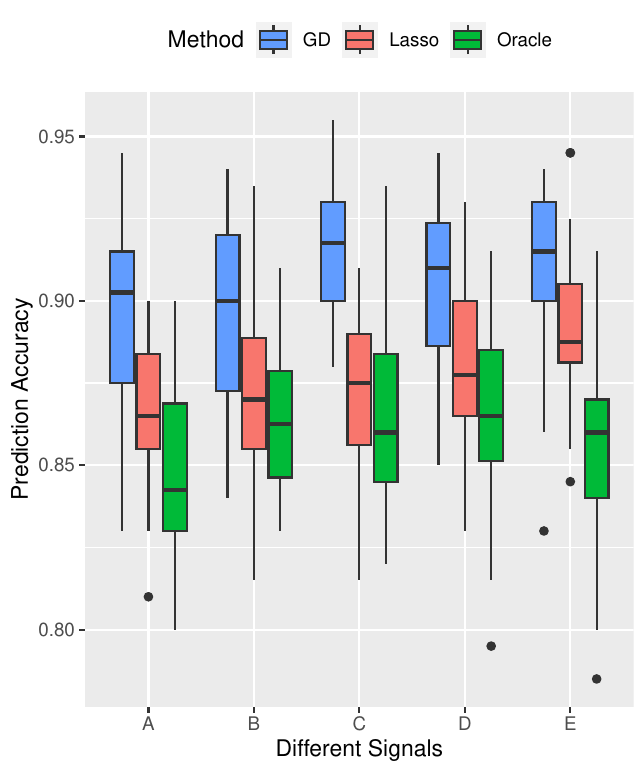}
}
\subfigure[Variable Selection Result]{
\includegraphics[width=0.31\textwidth]{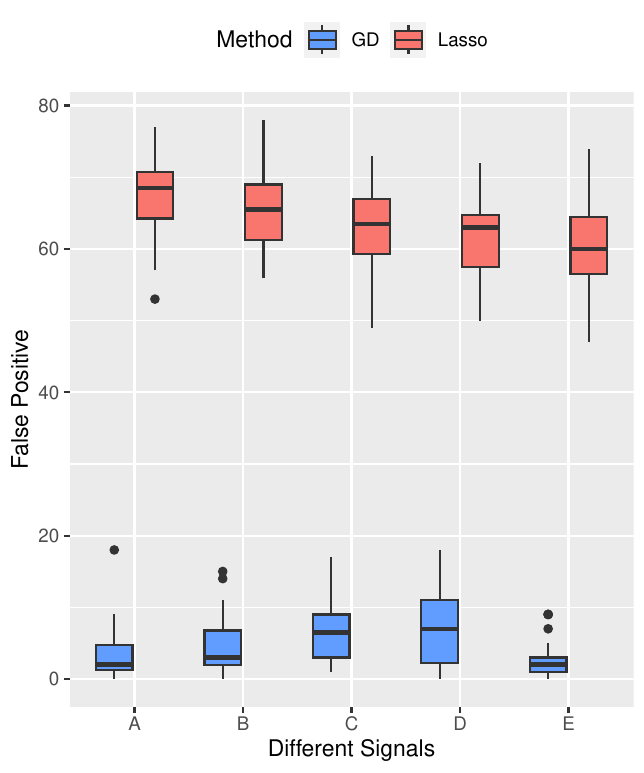}
}
\caption{Performance on complex signal structure. The boxplots are depicted based on $30$ runs. }
\label{fig:complex}
\end{figure*}

{\bf{Performance on Heavy-tailed Distribution.}} 
Although the sub-Gaussian distribution of input data is assumed in Assumption \ref{ass1}, we demonstrate that our method can be extended to a wider range of distributions. We conduct simulations under both uniform and heavy-tailed distributions. The simulation setup mirrors that of Figure \ref{fig:complex}, with the exception that we sample $\bx$ from a $[-1,1]$ uniform distribution and a $t(3)$ distribution, respectively. Results corresponding to the $t(3)$ distribution are presented in Figure \ref{fig:t}, and we can see that our method maintains strong performance, suggesting that the constraints of Assumption \ref{ass1} can be substantially relaxed. Additional simulation results can be found in the Appendix \ref{app a}.
\begin{figure*}[h]
\centering
\subfigure[Estimation Result]{
\includegraphics[width=0.31\textwidth]{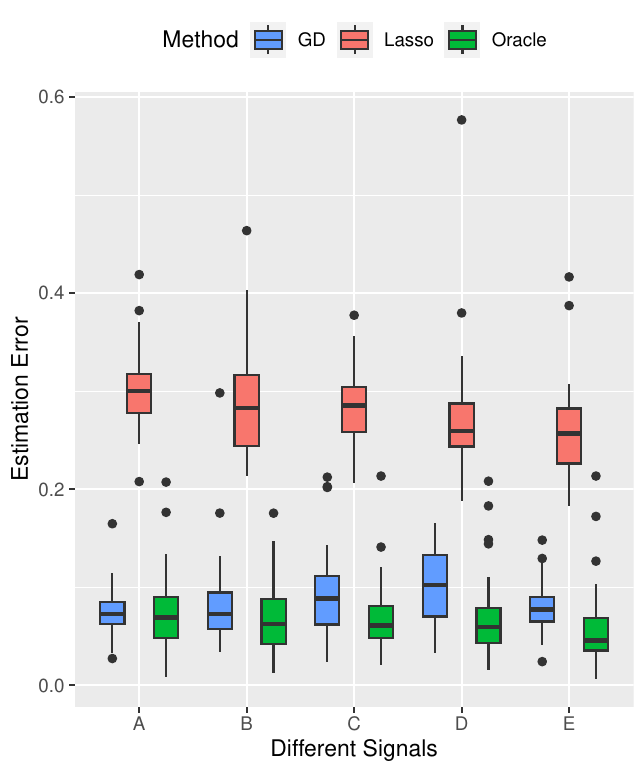}
}
\subfigure[Prediction Result]{
\includegraphics[width=0.31\textwidth]{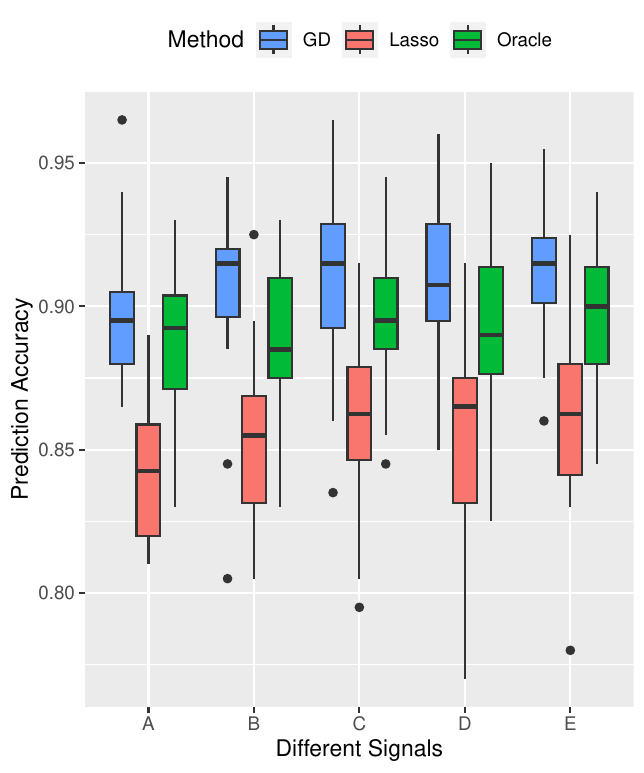}
}
\subfigure[Variable Selection Result]{
\includegraphics[width=0.31\textwidth]{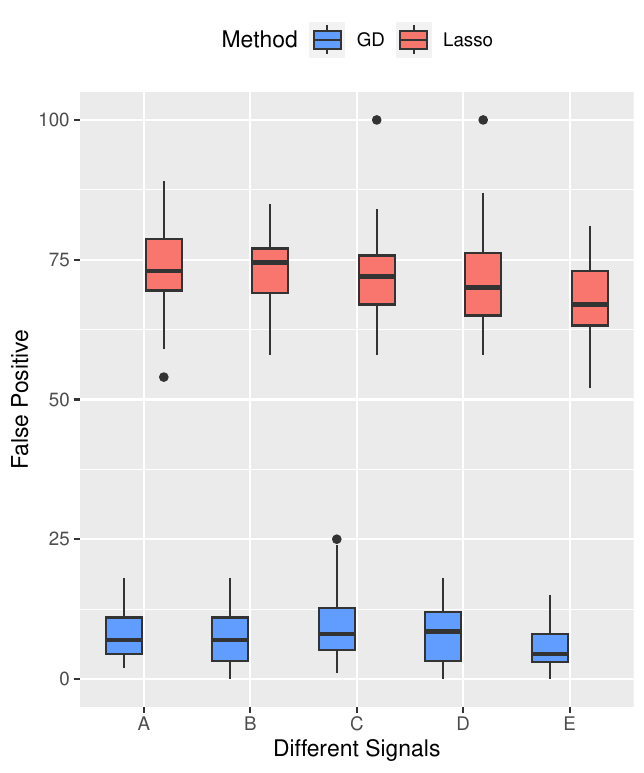}
}
\caption{Performance on complex signal structure under $t(3)$ distribution. The boxplots are depicted based on $30$ runs. }
\label{fig:t}
\end{figure*}


 {\bf Sensitivity Analysis with respect to smoothness parameter $\gamma$.} We analyze the impact of smoothness parameter $\gamma$ on our proposed algorithm. Specifically, the detailed experimental setup follows the default configuration, and $\gamma$ is set within the range $[2.5\times 10^{-5},1\times 10^{-3}]$. The simulations are replicated $30$ times, and the numerical results of estimation error and four estimated signal strengths are presented in Figure \ref{fig:gamma}. From Figure \ref{fig:gamma}, it's evident that the choice of $\gamma$ is relatively insensitive in the sense that the estimation errors and the estimated strengths of the signals under different $\gamma$s are very close. Furthermore, as $\gamma$ increases, the estimation accuracy experiences a minor decline, but it remains within an acceptable range. See Appendix \ref{app a} for simulation results of Signal $3$ and Signal $4$.

\begin{figure*}[h]
\centering
\subfigure[Estimation Result]{
\includegraphics[width=0.31\textwidth]{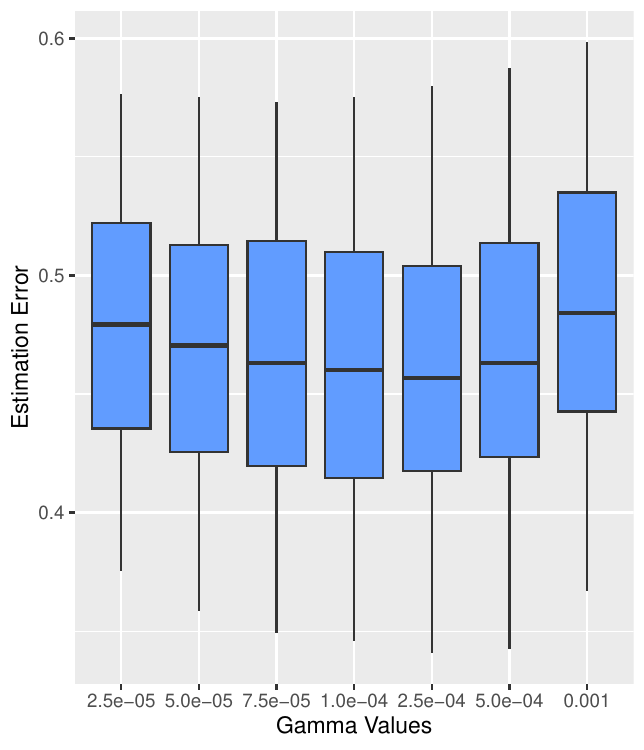}
}
\subfigure[Signal $1$]{
\includegraphics[width=0.31\textwidth]{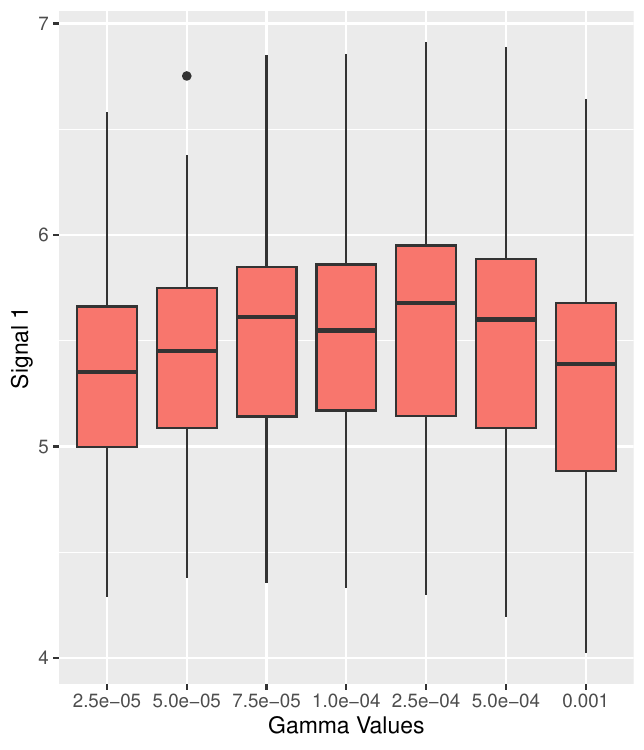}
}
\subfigure[Signal $2$]{
\includegraphics[width=0.31\textwidth]{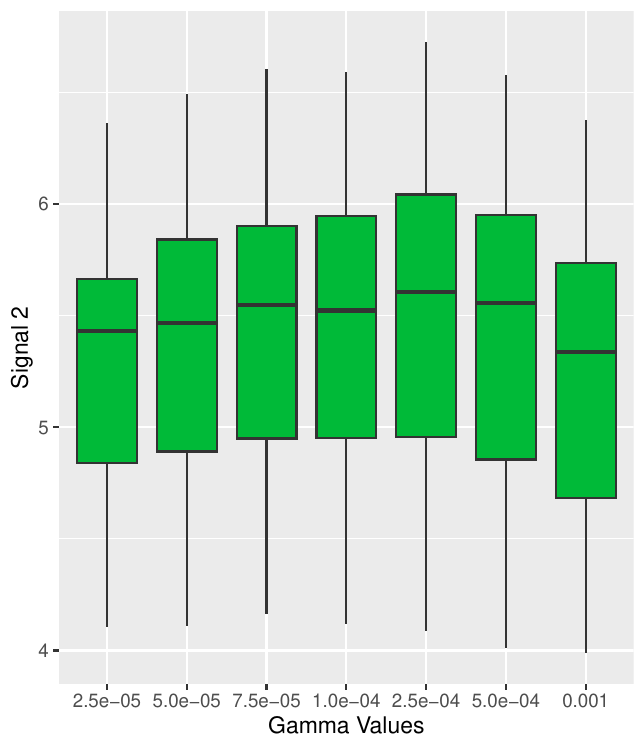}
}
\caption{Sensitivity analysis of smoothness parameter $\gamma$. The boxplots are depicted based on $30$ runs. The estimation error is calculated by $\Arrowvert\bbeta_t-\bbeta^*\Arrowvert/\Arrowvert\bbeta^*\Arrowvert$. }
\label{fig:gamma}
\end{figure*}

\section{Conclusion}
In this paper, we leverage over-parameterization to design an unregularized gradient-based algorithm for SVM and provide theoretical guarantees for implicit regularization. We employ Nesterov's method to smooth the re-parameterized hinge loss function, which solves the difficulty of non-differentiability and improves computational efficiency. Note that our theory relies on the incoherence of the design matrix. It would be interesting to explore to what extent these assumptions can be relaxed, which is a topic of future work mentioned in other studies on implicit regularization. It is also promising to consider extending the current study to nonlinear SVMs, potentially incorporating the kernel technique to delve into the realm of implicit regularization in nonlinear classification. In summary, this paper not only provides novel theoretical results for over-parameterized SVMs but also enriches the literature on high-dimensional classification with implicit regularization.

\section{Acknowledgements}
The authors sincerely thank the anonymous reviewers, AC, and PCs for their valuable suggestions that have greatly improved the quality of our work.
The authors also thank Professor Shaogao Lv for the fruitful and valuable discussions at the very beginning of this work.
Dr. Xin He's and Dr. Yang Bai's work is supported by the Program for Innovative Research Team of Shanghai University of Finance and Economics and the Shanghai Research Center for Data Science and Decision Technology.
\bibliographystyle{plain}

\begin{thebibliography}{10}

\bibitem{almeida2011contributions}
Tiago~A Almeida, Jos{\'e} Mar{\'\i}a~G Hidalgo, and Akebo Yamakami.
\newblock Contributions to the study of sms spam filtering: new collection and results.
\newblock In {\em Proceedings of the 11th ACM symposium on Document engineering}, pages 259--262, 2011.

\bibitem{apidopoulos2022iterative}
Vassilis Apidopoulos, Tomaso Poggio, Lorenzo Rosasco, and Silvia Villa.
\newblock Iterative regularization in classification via hinge loss diagonal descent.
\newblock {\em arXiv preprint arXiv:2212.12675}, 2022.

\bibitem{arora2019implicit}
Sanjeev Arora, Nadav Cohen, Wei Hu, and Yuping Luo.
\newblock Implicit regularization in deep matrix factorization.
\newblock {\em Advances in Neural Information Processing Systems}, 32, 2019.

\bibitem{bahraoui1994convergence}
MA~Bahraoui and B~Lemaire.
\newblock Convergence of diagonally stationary sequences in convex optimization.
\newblock {\em Set-Valued Analysis}, 2:49--61, 1994.

\bibitem{bandeira2013certifying}
Afonso~S Bandeira, Edgar Dobriban, Dustin~G Mixon, and William~F Sawin.
\newblock Certifying the restricted isometry property is hard.
\newblock {\em IEEE transactions on information theory}, 59(6):3448--3450, 2013.

\bibitem{candes2005decoding}
Emmanuel~J Candes and Terence Tao.
\newblock Decoding by linear programming.
\newblock {\em IEEE transactions on information theory}, 51(12):4203--4215, 2005.

\bibitem{carin2011coherence}
Lawrence Carin, Dehong Liu, and Bin Guo.
\newblock Coherence, compressive sensing, and random sensor arrays.
\newblock {\em IEEE Antennas and Propagation Magazine}, 53(4):28--39, 2011.

\bibitem{chapelle2007training}
Olivier Chapelle.
\newblock Training a support vector machine in the primal.
\newblock {\em Neural computation}, 19(5):1155--1178, 2007.

\bibitem{donoho2005stable}
David~L Donoho, Michael Elad, and Vladimir~N Temlyakov.
\newblock Stable recovery of sparse overcomplete representations in the presence of noise.
\newblock {\em IEEE Transactions on information theory}, 52(1):6--18, 2005.

\bibitem{fan2021selective}
Jianqing Fan, Cong Ma, and Yiqiao Zhong.
\newblock A selective overview of deep learning.
\newblock {\em Statistical science: a review journal of the Institute of Mathematical Statistics}, 36(2):264, 2021.

\bibitem{fan2022understanding}
Jianqing Fan, Zhuoran Yang, and Mengxin Yu.
\newblock Understanding implicit regularization in over-parameterized single index model.
\newblock {\em Journal of the American Statistical Association}, pages 1--14, 2022.

\bibitem{foucart2013invitation}
Simon Foucart, Holger Rauhut, Simon Foucart, and Holger Rauhut.
\newblock {\em An invitation to compressive sensing}.
\newblock Springer, 2013.

\bibitem{gunasekar2018implicit}
Suriya Gunasekar, Jason~D Lee, Daniel Soudry, and Nati Srebro.
\newblock Implicit bias of gradient descent on linear convolutional networks.
\newblock {\em Advances in neural information processing systems}, 31, 2018.

\bibitem{gunasekar2017implicit}
Suriya Gunasekar, Blake~E Woodworth, Srinadh Bhojanapalli, Behnam Neyshabur, and Nati Srebro.
\newblock Implicit regularization in matrix factorization.
\newblock {\em Advances in Neural Information Processing Systems}, 30, 2017.

\bibitem{he2020variable}
Xin He, Shaogao Lv, and Junhui Wang.
\newblock Variable selection for classification with derivative-induced regularization.
\newblock {\em Statistica Sinica}, 30(4):2075--2103, 2020.

\bibitem{Hoff2017}
P.~Hoff.
\newblock Lasso, fractional norm and structured sparse estimation using a hadamard product parametrization.
\newblock {\em Computational Statistics $\&$ Data Analysis}, \textbf{115}:186--198, 2017.

\bibitem{huang2012identification}
Yuan Huang, Jian Huang, Ben-Chang Shia, and Shuangge Ma.
\newblock Identification of cancer genomic markers via integrative sparse boosting.
\newblock {\em Biostatistics}, 13(3):509--522, 2012.

\bibitem{jaggi2013revisiting}
Martin Jaggi.
\newblock Revisiting frank-wolfe: Projection-free sparse convex optimization.
\newblock In {\em International conference on machine learning}, pages 427--435. PMLR, 2013.

\bibitem{keskarlarge}
Nitish~Shirish Keskar, Dheevatsa Mudigere, Jorge Nocedal, Mikhail Smelyanskiy, and Ping Tak~Peter Tang.
\newblock On large-batch training for deep learning: Generalization gap and sharp minima.
\newblock In {\em International Conference on Learning Representations}.

\bibitem{kharoubi2023high}
Rachid Kharoubi, Abdallah Mkhadri, and Karim Oualkacha.
\newblock High-dimensional penalized bernstein support vector machines.
\newblock {\em arXiv preprint arXiv:2303.09066}, 2023.

\bibitem{lacoste2013block}
Simon Lacoste-Julien, Martin Jaggi, Mark Schmidt, and Patrick Pletscher.
\newblock Block-coordinate frank-wolfe optimization for structural svms.
\newblock In {\em International Conference on Machine Learning}, pages 53--61. PMLR, 2013.

\bibitem{lecun2015deep}
Yann LeCun, Yoshua Bengio, and Geoffrey Hinton.
\newblock Deep learning.
\newblock {\em Nature}, 521(7553):436--444, 2015.

\bibitem{li2021implicit}
Jiangyuan Li, Thanh Nguyen, Chinmay Hegde, and Ka~Wai Wong.
\newblock Implicit sparse regularization: The impact of depth and early stopping.
\newblock {\em Advances in Neural Information Processing Systems}, 34:28298--28309, 2021.

\bibitem{li2018algorithmic}
Yuanzhi Li, Tengyu Ma, and Hongyang Zhang.
\newblock Algorithmic regularization in over-parameterized matrix sensing and neural networks with quadratic activations.
\newblock In {\em Conference On Learning Theory}, pages 2--47. PMLR, 2018.

\bibitem{ma2021sign}
Jianhao Ma and Salar Fattahi.
\newblock Sign-rip: A robust restricted isometry property for low-rank matrix recovery.
\newblock {\em arXiv preprint arXiv:2102.02969}, 2021.

\bibitem{ma2022global}
Jianhao Ma and Salar Fattahi.
\newblock Global convergence of sub-gradient method for robust matrix recovery: Small initialization, noisy measurements, and over-parameterization.
\newblock {\em arXiv preprint arXiv:2202.08788}, 2022.

\bibitem{molinari2021iterative}
Cesare Molinari, Mathurin Massias, Lorenzo Rosasco, and Silvia Villa.
\newblock Iterative regularization for convex regularizers.
\newblock In {\em International conference on artificial intelligence and statistics}, pages 1684--1692. PMLR, 2021.

\bibitem{Nesterov2005}
Y.~Nesterov.
\newblock Smooth minimization of non-smooth functions.
\newblock {\em Mathematical Programming}, \textbf{103}:127--152, 2005.

\bibitem{neyshabur2017geometry}
Behnam Neyshabur, Ryota Tomioka, Ruslan Salakhutdinov, and Nathan Srebro.
\newblock Geometry of optimization and implicit regularization in deep learning.
\newblock {\em arXiv preprint arXiv:1705.03071}, 2017.

\bibitem{neyshabur2015search}
Behnam Neyshabur, Ryota Tomioka, and Nathan Srebro.
\newblock In search of the real inductive bias: On the role of implicit regularization in deep learning.
\newblock {\em International Conference on Learning Representations}, 2015.

\bibitem{otter2020survey}
Daniel~W Otter, Julian~R Medina, and Jugal~K Kalita.
\newblock A survey of the usages of deep learning for natural language processing.
\newblock {\em IEEE transactions on neural networks and learning systems}, 32(2):604--624, 2020.

\bibitem{peng2016error}
Bo~Peng, Lan Wang, and Yichao Wu.
\newblock An error bound for l1-norm support vector machine coefficients in ultra-high dimension.
\newblock {\em The Journal of Machine Learning Research}, 17(1):8279--8304, 2016.

\bibitem{soudry2018implicit}
Daniel Soudry, Elad Hoffer, Mor~Shpigel Nacson, Suriya Gunasekar, and Nathan Srebro.
\newblock The implicit bias of gradient descent on separable data.
\newblock {\em The Journal of Machine Learning Research}, 19(1):2822--2878, 2018.

\bibitem{tajima2021frank}
Kenya Tajima, Yoshihiro Hirohashi, Esmeraldo Ronnie~Rey Zara, and Tsuyoshi Kato.
\newblock Frank-wolfe algorithm for learning svm-type multi-category classifiers.
\newblock In {\em Proceedings of the 2021 SIAM International Conference on Data Mining (SDM)}, pages 432--440. SIAM, 2021.

\bibitem{vaskevicius2019implicit}
Tomas Vaskevicius, Varun Kanade, and Patrick Rebeschini.
\newblock Implicit regularization for optimal sparse recovery.
\newblock {\em Advances in Neural Information Processing Systems}, 32, 2019.

\bibitem{voulodimos2018deep}
Athanasios Voulodimos, Nikolaos Doulamis, Anastasios Doulamis, Eftychios Protopapadakis, et~al.
\newblock Deep learning for computer vision: A brief review.
\newblock {\em Computational intelligence and neuroscience}, 2018, 2018.

\bibitem{wang2021improved}
Junhui Wang, Jiankun Liu, Yong Liu, et~al.
\newblock Improved learning rates of a functional lasso-type svm with sparse multi-kernel representation.
\newblock {\em Advances in Neural Information Processing Systems}, 34:21467--21479, 2021.

\bibitem{wilson2017marginal}
Ashia~C Wilson, Rebecca Roelofs, Mitchell Stern, Nati Srebro, and Benjamin Recht.
\newblock The marginal value of adaptive gradient methods in machine learning.
\newblock {\em Advances in neural information processing systems}, 30, 2017.

\bibitem{woodworth2020kernel}
Blake Woodworth, Suriya Gunasekar, Jason~D Lee, Edward Moroshko, Pedro Savarese, Itay Golan, Daniel Soudry, and Nathan Srebro.
\newblock Kernel and rich regimes in overparametrized models.
\newblock In {\em Conference on Learning Theory}, pages 3635--3673. PMLR, 2020.

\bibitem{yunsmall}
Chulhee Yun, Suvrit Sra, and Ali Jadbabaie.
\newblock Small nonlinearities in activation functions create bad local minima in neural networks.
\newblock In {\em International Conference on Learning Representations}.

\bibitem{zhang2006variable}
Hao~Helen Zhang.
\newblock Variable selection for support vector machines via smoothing spline anova.
\newblock {\em Statistica Sinica}, pages 659--674, 2006.

\bibitem{zhang2016variable}
Xiang Zhang, Yichao Wu, Lan Wang, and Runze Li.
\newblock Variable selection for support vector machines in moderately high dimensions.
\newblock {\em Journal of the Royal Statistical Society Series B: Statistical Methodology}, 78(1):53--76, 2016.

\bibitem{zhang2022statistical}
Yingying Zhang, Yan-Yong Zhao, and Heng Lian.
\newblock Statistical rates of convergence for functional partially linear support vector machines for classification.
\newblock {\em Journal of Machine Learning Research}, 23(156):1--24, 2022.

\bibitem{zhao2022high}
Peng Zhao, Yun Yang, and Qiao-Chu He.
\newblock High-dimensional linear regression via implicit regularization.
\newblock {\em Biometrika}, 109(4):1033--1046, 2022.

\bibitem{zhou2010nesvm}
Tianyi Zhou, Dacheng Tao, and Xindong Wu.
\newblock Nesvm: A fast gradient method for support vector machines.
\newblock In {\em 2010 IEEE International Conference on Data Mining}, pages 679--688. IEEE, 2010.

\bibitem{zhou2022communication}
Xingcai Zhou and Hao Shen.
\newblock Communication-efficient distributed learning for high-dimensional support vector machines.
\newblock {\em Mathematics}, 10(7):1029, 2022.

\end{thebibliography}

\newpage

\appendix
\section*{Appendix}
The appendix is organized as follows.

In Appendix \ref{app a}, we provide supplementary explanations and additional experimental results for the numerical study.

In Appendix \ref{app other}, we discuss the performance of our method using other data generating schemes.

In Appendix \ref{app B}, we characterize the dynamics of the gradient descent algorithm for minimizing the Hadamard product re-parametrized smoothed hinge loss $\cE^*_{\cZ^n,\gamma}$.

In Appendix \ref{app C}, we prove the main results stated in the paper.

In Appendix \ref{app D}, we provide the proof of propositions and technical lemmas in Section \ref{theory}.

\section{Additional Experimental Results}\label{app a}
\subsection{Effects of Small Initialization on Error Components}
We present a comprehensive analysis of the impact of initialization on the error components, as depicted in Figure \ref{fig:error com}. Our results demonstrate that, while the initialization size $\alpha$ does not alter the error trajectory, it significantly affects the error bounds. Numerically, when the initialization $\alpha$ is $10^{-4}$, the maximum absolute value of the error components is around $3\times 10^{-5}$, which is bounded by the initialization size $10^{-4}$. Similarly, when the initialization is $10^{-10}$, the maximum absolute value of the error components is around $7.5\times 10^{-14}$, which is bounded by the initialization size $10^{-10}$. The same result is obtained for the other two initialization sizes. The specific numerical results we obtained validate the conclusions drawn in Proposition \ref{proerror}, as the error term satisfies 
$$
\Arrowvert\bbeta_{t}\odot\bm1_{S_1^c}\Arrowvert_{\infty}=\Arrowvert\bw_{t}\odot\bw_t\odot\bm1_{S_1^c}-\bv_{t}\odot\bv_t\odot\bm1_{S_1^c}\Arrowvert_{\infty}\lesssim (\sqrt{\alpha})^2=\alpha.
$$
\begin{figure*}[h]
\centering
\subfigure{
\includegraphics[width=0.31\textwidth]{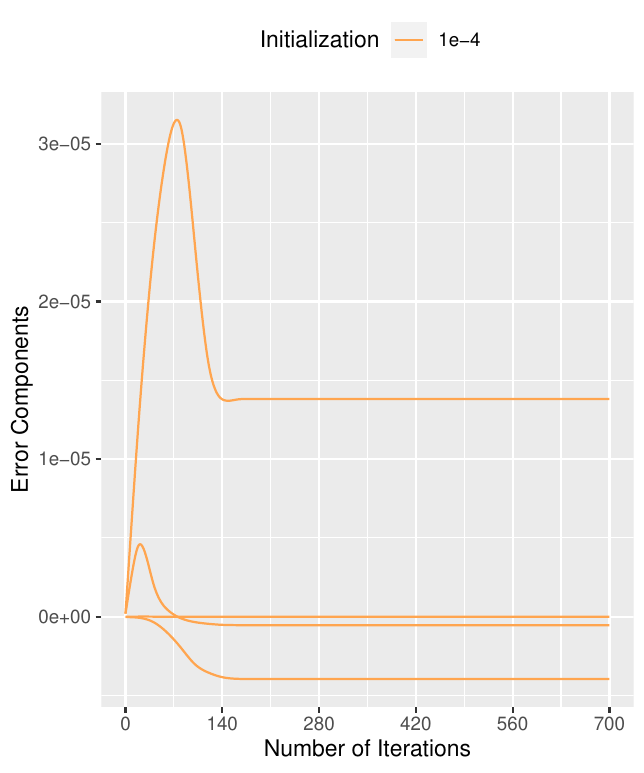}
}
\subfigure{
\includegraphics[width=0.31\textwidth]{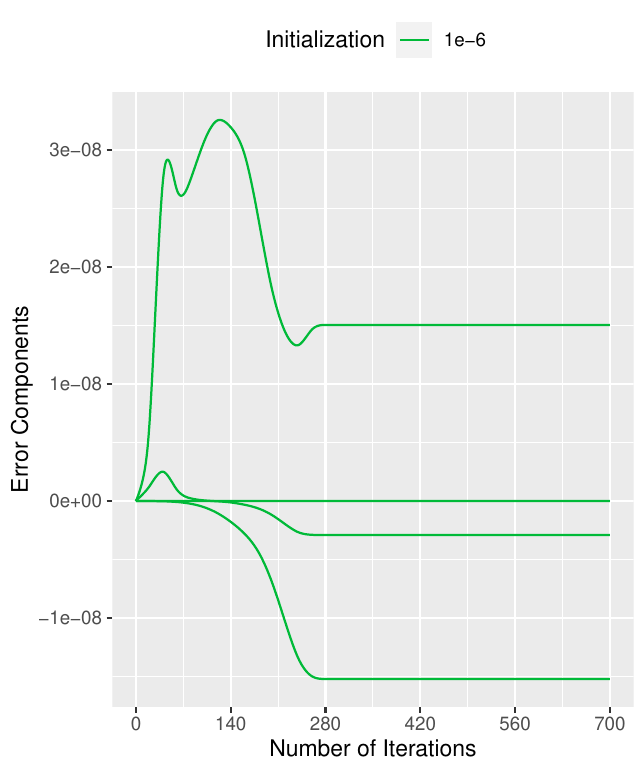}
}
\vfill
\subfigure{
\includegraphics[width=0.31\textwidth]{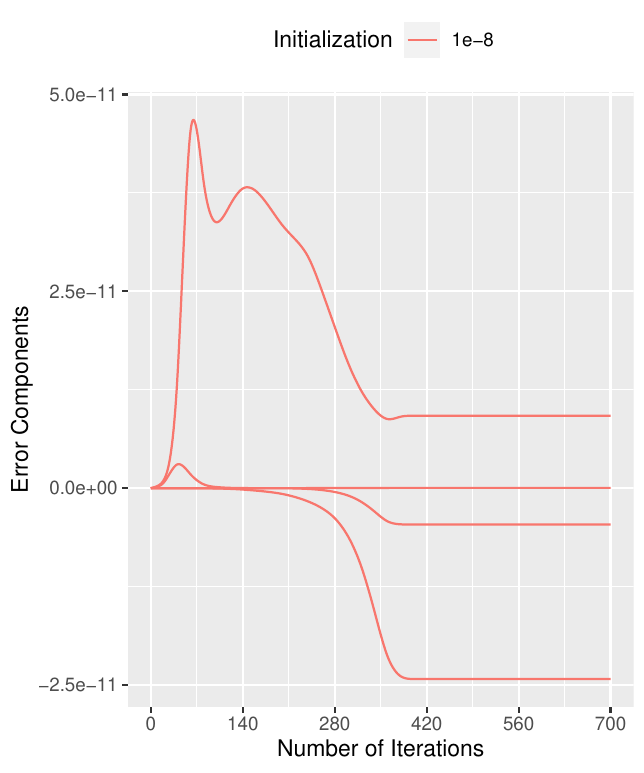}
}
\subfigure{
\includegraphics[width=0.31\textwidth]{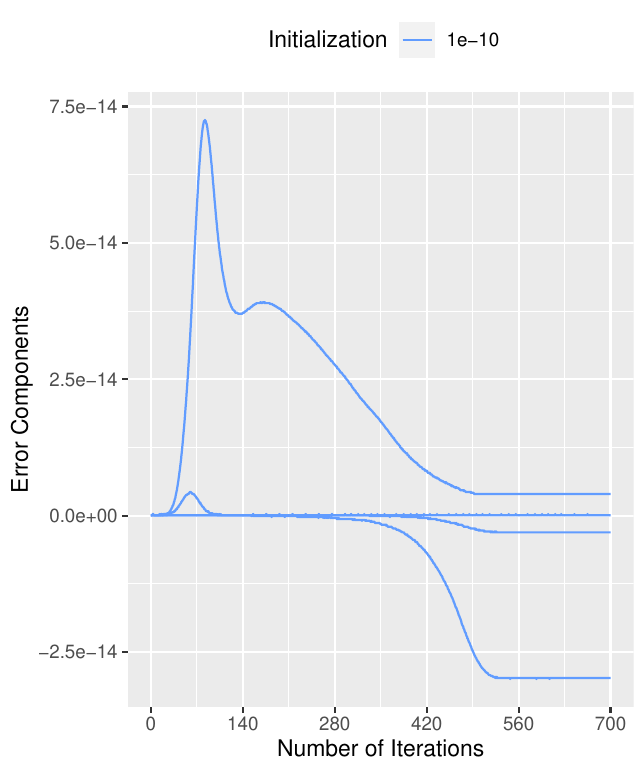}
}
\caption{Effects of small initialization $\alpha$ on error components.}
\label{fig:error com}
\end{figure*}
\subsection{True Negative Results}
In the main text, we could not include the True Negative error figures in \ref{fig:strenghth}, \ref{fig:sample}, \ref{fig:complex}, and \ref{fig:t} due to space constraints. However, these figures are provided in Figure \ref{fig:true negative}. We observe that both the gradient descent estimator and the Lasso estimator effectively detect the true signals across different settings.
\begin{figure*}[h]
\centering
\subfigure[Results in Figure \ref{fig:strenghth}]{
\includegraphics[width=0.31\textwidth]{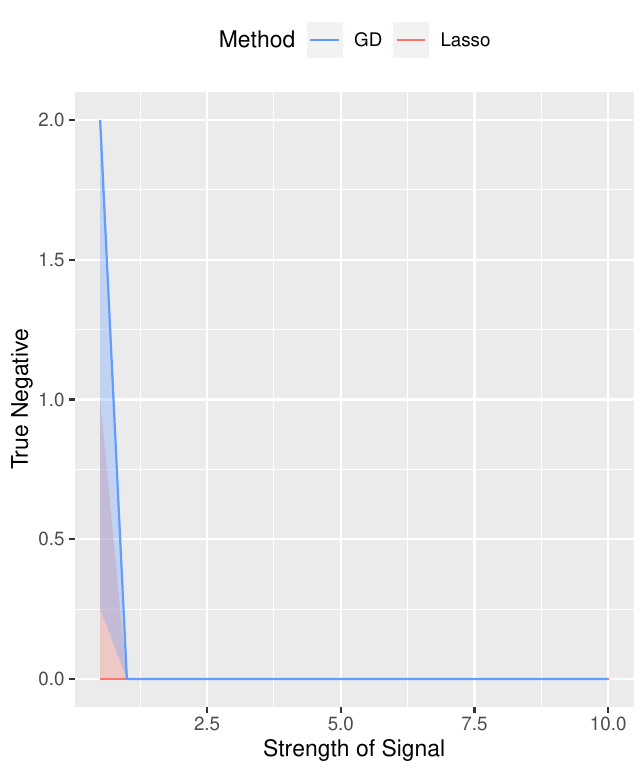}
}
\subfigure[Results in Figure \ref{fig:sample}]{
\includegraphics[width=0.31\textwidth]{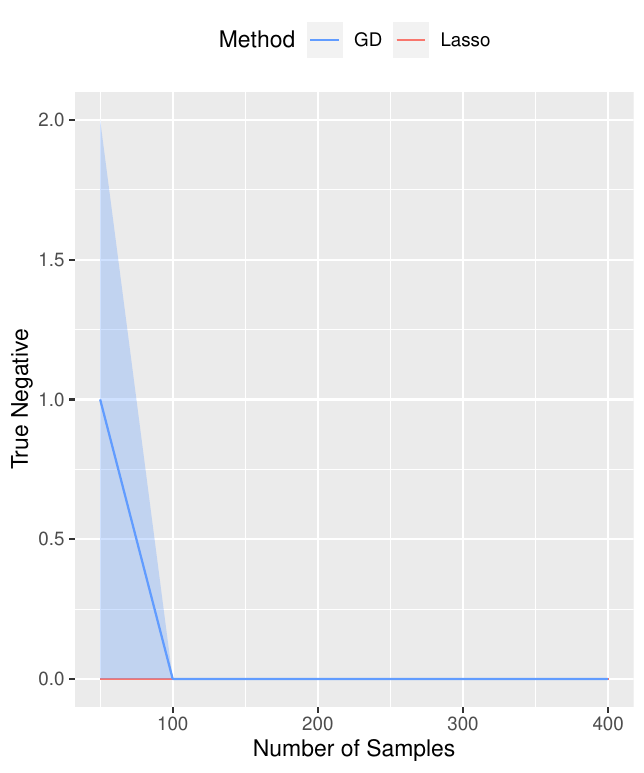}
}
\vfill
\subfigure[Results in Figure \ref{fig:complex}]{
\includegraphics[width=0.31\textwidth]{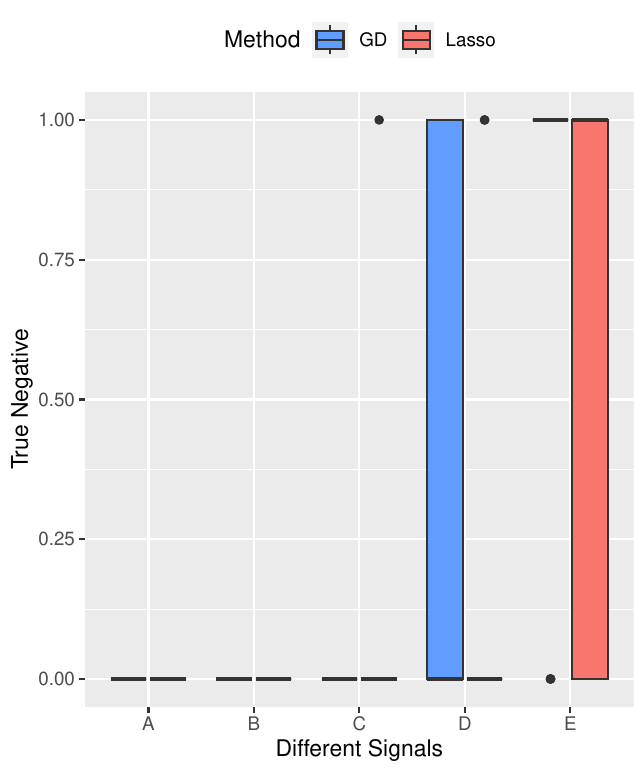}
}
\subfigure[Results in Figure \ref{fig:t}]{
\includegraphics[width=0.31\textwidth]{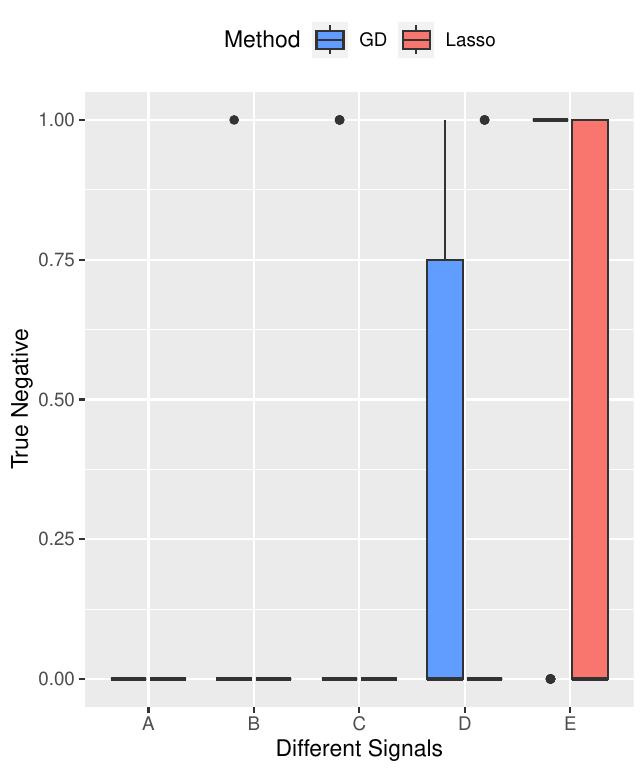}
}
\caption{The True Negative results correspond to the settings illustrated in Figures \ref{fig:strenghth}, \ref{fig:sample}, \ref{fig:complex} and \ref{fig:t}, respectively.}
\label{fig:true negative}
\end{figure*}
\subsection{Additional Results under Uniform Distribution}
As previously described, we sample $\bX$ from a $[-1,1]$ uniform distribution and set other parameters as follows: true signal structures are $\bA = (5,6,7,8)$, $\bB = (4,6,8,9)$, $\bC = (3,6,9,10)$, $\bD = (2,6,10,11)$, and $\bE = (1,6,11,12)$. The number of signals is $s=4$, the sample size is $n=200$, dimension is $p=400$, step size is $\eta=0.5$, smoothness parameter is $\gamma=10^{-4}$, and the initialization size is $\alpha=10^{-8}$. The experimental results are summarized in Figure \ref{fig:uniform}. As depicted in Figure \ref{fig:uniform}, the gradient descent (GD) estimator consistently outperforms the Lasso estimator in terms of estimation accuracy and variable selection error. Both methods slightly surpass the oracle estimator in terms of prediction. However, this advantage mainly stems from the inevitable overestimation of the number of signals in the estimates by both the GD and Lasso methods.
\begin{figure*}[h]
\centering
\subfigure[Estimation Result]{
\includegraphics[width=0.31\textwidth]{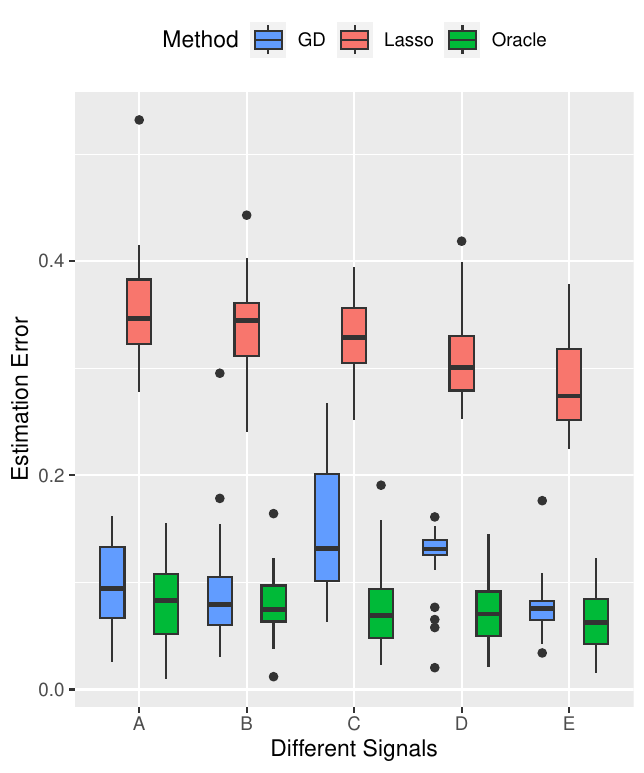}
}
\subfigure[Prediction Result]{
\includegraphics[width=0.31\textwidth]{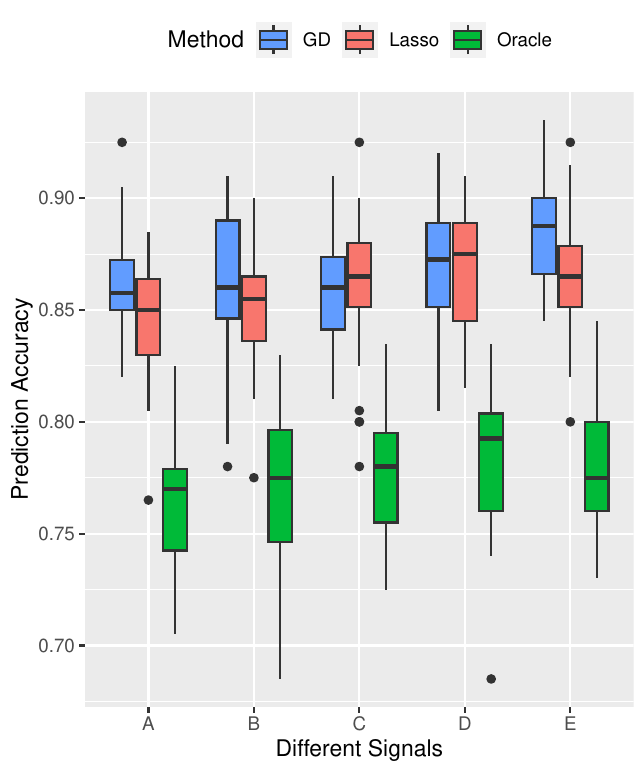}
}
\vfill
\subfigure[False Positive Result]{
\includegraphics[width=0.31\textwidth]{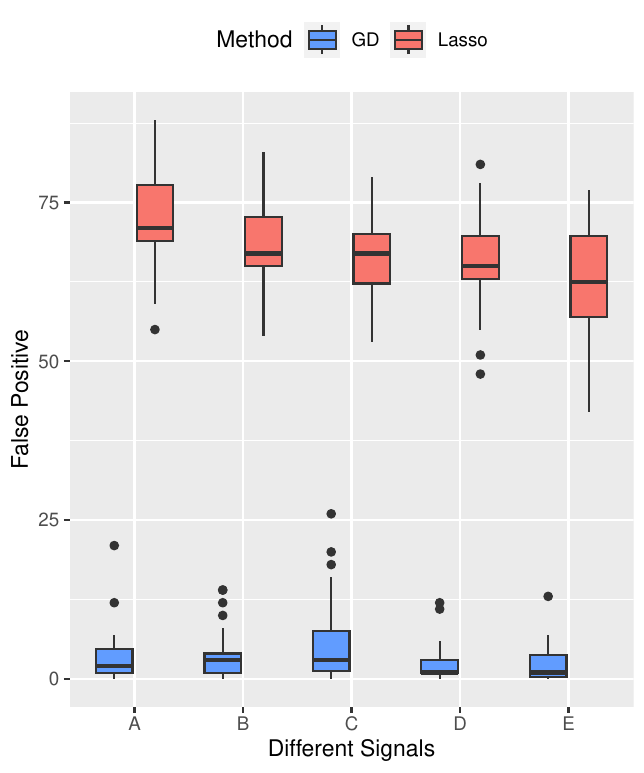}
}
\subfigure[True Negative Result]{
\includegraphics[width=0.31\textwidth]{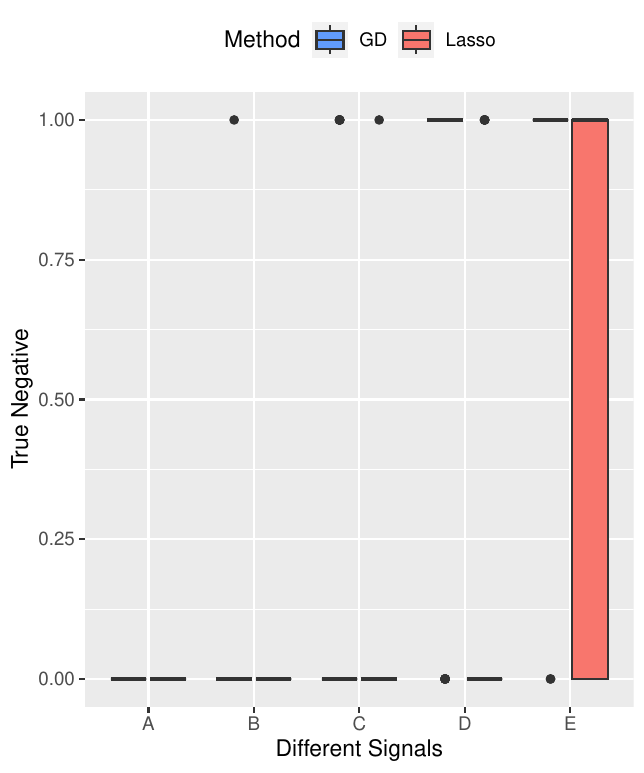}
}
\caption{Performance on complex signal structure under $[-1,1]$ uniform distribution. The boxplots are depicted based on $30$ runs. }
\label{fig:uniform}
\end{figure*}

\subsection{Additional Results of Sensitivity Analysis}
Within our sensitivity analysis, the simulation results for Signal $3$ and Signal $4$ are presented in Figure \ref{fig:gamma34}. We observe that the estimated strengths of the two signals exhibit similar distributions across different values of the smoothness parameter $\gamma$.

\begin{figure*}[h]
\centering
\subfigure[Signal $3$]{
\includegraphics[width=0.31\textwidth]{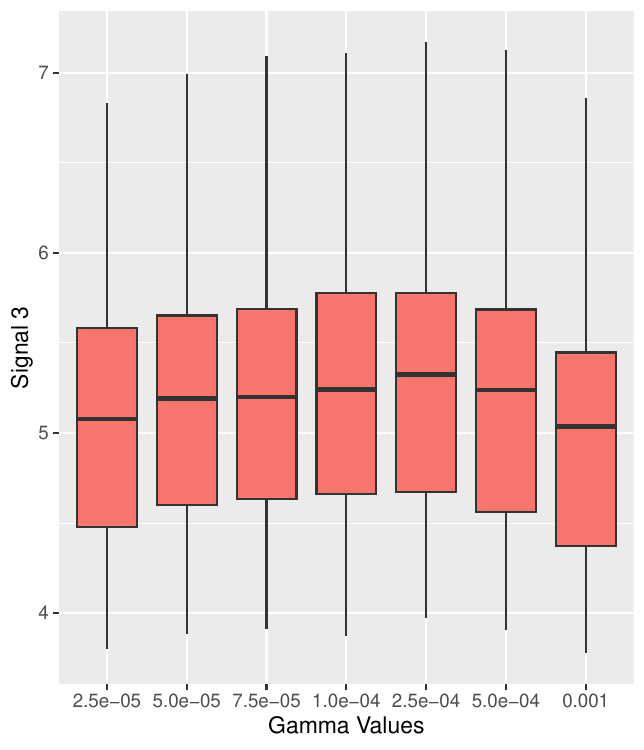}
}
\subfigure[Signal $4$]{
\includegraphics[width=0.31\textwidth]{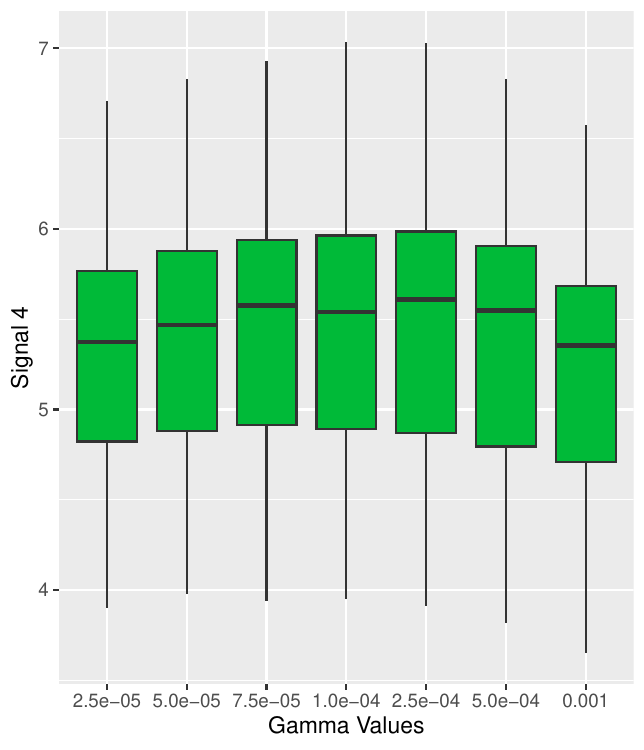}
}
\caption{Sensitivity analysis of smoothness parameter $\gamma$. The boxplots are depicted based on $30$ runs.}
\label{fig:gamma34}
\end{figure*}

\section{Other Data Generating Schemes}\label{app other}
In section \ref{sim}, the scheme for generating $y$ in our original submission is based on a treatment similar to those in \cite{zhang2006variable,zhang2016variable}, where the task of variable selection for the support vector machine is considered. In this section, we also introduce other generating schemes as adopted in \cite{peng2016error,he2020variable}. Specifically, we generate random data based on the following two models.

 \begin{itemize}
 \item Model $1$: $\bx\sim MN(\bm0_p,\bSigma)$, $\bSigma=(\sigma_{ij})$ with nonzero elements $\sigma_{ij}=0.4^{|i-j|}$ for $1\le i,j\le p$, $P(y=1|\bx)=\Phi(\bx^T\bbeta^*)$, where $\Phi(\cdot)$ is the cumulative density function of the standard normal distribution, $\bbeta^*=(1.1,1.1,1.1,1.1,0,\ldots,0)^T$ and $s=4$. 
\item Model $2$: $P(y=1)=P(y=-1)=0.5$, $\bx|(y=1)\sim MN(\bmu,\bSigma)$, $\bx|(y=-1)\sim MN(-\bmu,\bSigma)$, $s=5$, $\bmu=(0.1,0.2,0.3,0.4,0.5,0,\ldots,0)^T$, $\bSigma=(\sigma_{ij})$ with diagonal entries equal to 1, nonzero entries $\sigma_{ij}=-0.2$ for $1\le i\not=j\le s$ and other entries equal to $0$. The bayes rule is $\sign(1.39\bx_1+1.47\bx_2+1.56\bx_3+1.65\bx_4+1.74\bx_5)$ with bayes error $6.3\%$.
 \end{itemize}

We follow the default parameter setup from Section \ref{sim}, and the experiments are repeated $30$ times. The averaged estimation and prediction results are presented in Figure \ref{fig:model12}. From Figure \ref{fig:model12}, it's evident that the GD estimator closely approximates the oracle estimator in terms of estimation error and prediction accuracy.
\begin{figure*}[h]
\centering
\subfigure[Model $1$: Estimation Error]{
\includegraphics[width=0.31\textwidth]{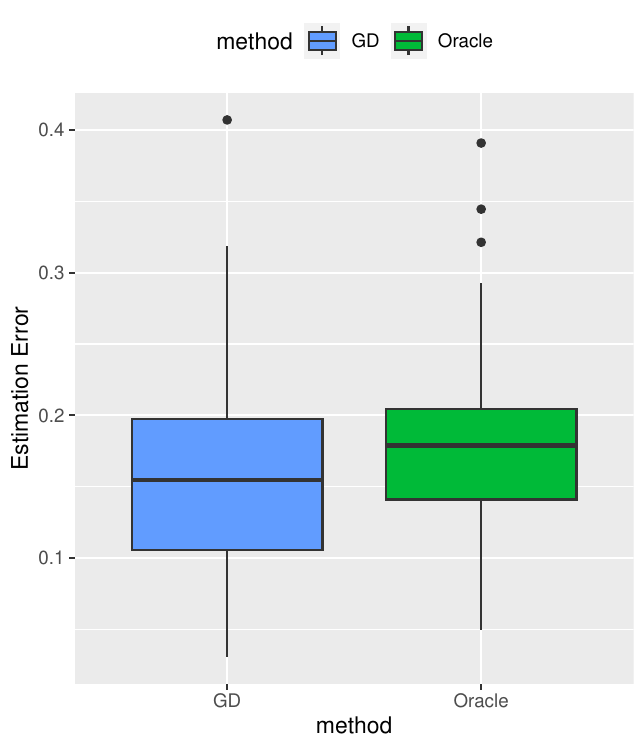}
}
\subfigure[Model $1$: Non-normalized Error]{
\includegraphics[width=0.31\textwidth]{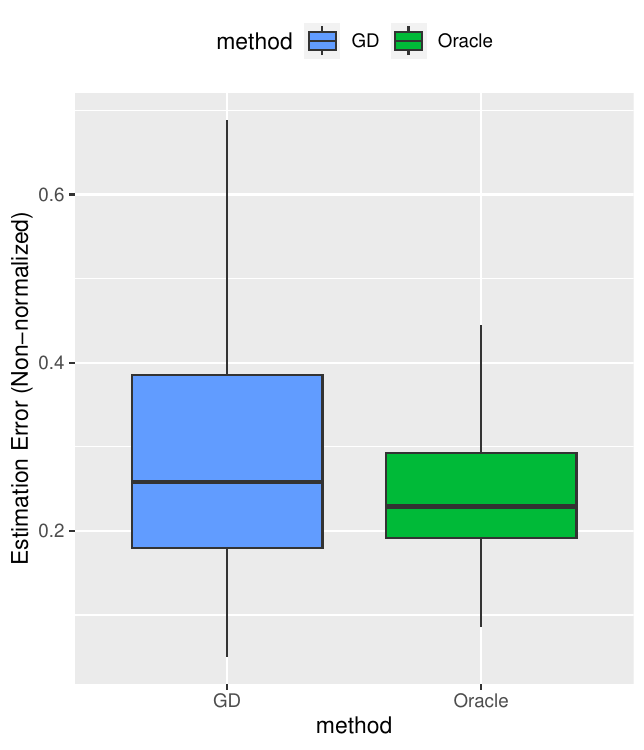}
}
\subfigure[Model $1$: Prediction Result]{
\includegraphics[width=0.31\textwidth]{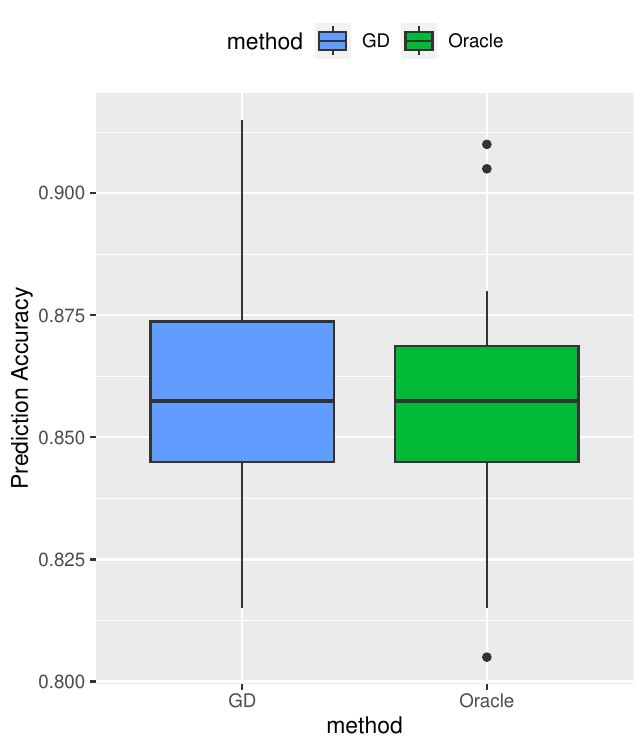}
}
\subfigure[Model $2$: Estimation Error]{
\includegraphics[width=0.31\textwidth]{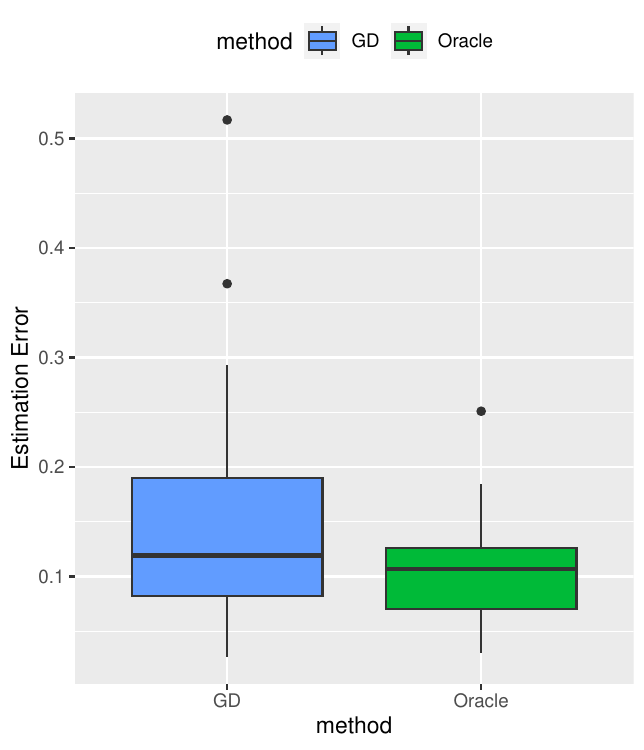}
}
\subfigure[Model $2$: Non-normalized Error]{
\includegraphics[width=0.31\textwidth]{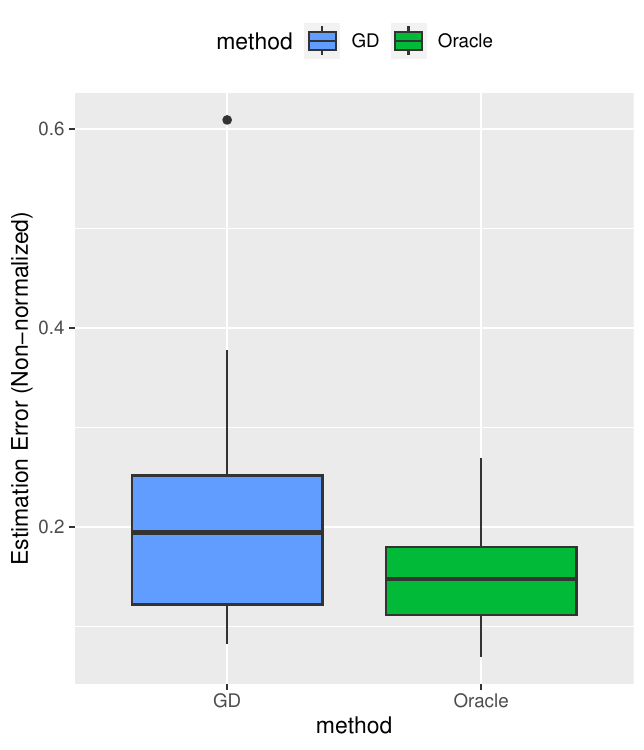}
}
\subfigure[Model $2$: Prediction Result]{
\includegraphics[width=0.31\textwidth]{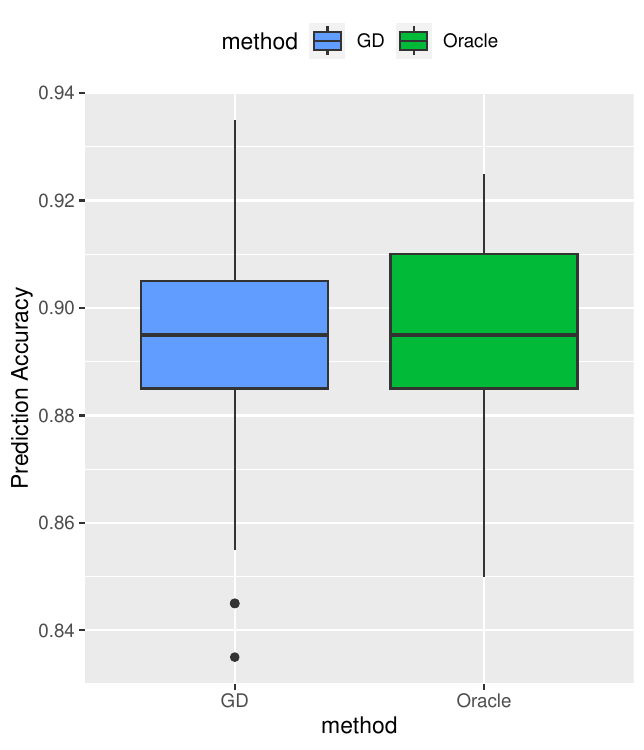}
}
\caption{Estimation error and prediction performance on Model $1$ and Model $2$. The boxplots are depicted based on $30$ runs. The non-normalized error is calculated by $\Arrowvert\bbeta_t-\bbeta^*\Arrowvert/\Arrowvert\bbeta^*\Arrowvert$.}
\label{fig:model12}
\end{figure*}

\section{Gradient Descent Dynamics}\label{app B}

First, let's recall the gradient descent updates. We over-parameterize $\bbeta$ by writing it as $\bw\odot\bw-\bv\odot\bv$, where $\bw$ and $\bv$ are $p\times1$ vectors. We then apply gradient descent to the following optimization problem:
$$
 \min_{\bw, \bv}\max_{\bmu\in {\cal P}_1} \Big \{ \frac{1}{n}\sum_{i=1}^n  \big (1-y_i {\bx}_i^T(\bw\odot \bw - \bv \odot \bv) \big ) \mu_i- d_{\gamma}(\bmu) \Big \},
$$
where $d_{\gamma}(\bmu)=\frac{\gamma}{2}\Arrowvert\bmu\Arrowvert^2$. The gradient descent updates with respect to $\bw$ and $\bv$ are given by
$$
{\bw}_{t+1}={\bw}_t+ 2\eta	\frac{1}{n}\sum_{i=1}^n   y_i \mu_{t,i} {\bx}_i \odot {\bw}_t \quad and\quad {\bv}_{t+1}={\bv}_t- 2\eta \frac{1}{n}\sum_{i=1}^n y_i \mu_{t,i}  {\bx}_i \odot {\bv}_t.
$$
For the sake of convenience, let $\cG_t\in\RR^p$ represent the gradients in the form of $\cG_t=n^{-1}\bX^T \bY\bmu_t$, where $\bY$ is a diagonal matrix composed of the elements of $y$. Consequently, the $i$-th element of $\cG_t$ can be expressed as $G_{t,i}=n^{-1}\sum_{k=1}^n \mu_{t,k}y_kx_{ki}$, where $\mu_{t,k}=\text{median}(0,(1-y_k\bx_k^T\bbeta_t)/\gamma n,1)$. Subsequently, the updating rule can be rephrased as follows:
\begin{align}\label{pupdate}
\bw_{t+1}={\bw}_t+2\eta{\bw}_t\odot\cG_t\quad and\quad {\bv}_{t+1}={\bv}_t-2\eta{\bv}_t\odot\cG_t.
\end{align}

Furthermore, the error parts of $\bw_t$ and $\bv_t$ are denoted by $\bw_t\odot\bm1_{S_1^c}$ and $\bv_t\odot\bm1_{S_1^c}$, which include both weak signal parts and pure error parts. In addition, strong signal parts of $\bw_t$ and $\bv_t$ are denoted by $\bw_t\odot\bm1_{S_1}$ and $\bv_t\odot\bm1_{S_1}$, respectively.

We examine the dynamic changes of error components and strong signal components in two stages. Without loss of generality, we focus on analyzing entries $i\in S_1$ where $\beta^*_i>0$. The analysis for the case where $\beta^*_i<0$ and $i\in S_1$ is similar and therefore not presented here. Specifically, within \textbf{\textit{Stage One}}, we can ensure that with a high probability, for $t\ge T_0=2 \log(m/\alpha^2)/(\eta m)$, the following results hold under Assumptions \ref{ass1} and \ref{ass2}:
\begin{equation}
\begin{aligned}\label{region}
{\textit{strong signal dynamics:}}\quad  &\beta_{t,i}\ge \min\left\{ \frac{\beta^*_i}{2},\left(1+\frac{\eta\beta^*_i}{\gamma n}\right)^t\alpha^2-\alpha^2\right\}\ \ for\ 
i\in S_1,\beta^*_i>0,\\
&\max\left\{\Arrowvert\bw_t\odot\bm1_{S_1}\Arrowvert_{\infty},\Arrowvert\bv_t\odot\bm1_{S_1}\Arrowvert_{\infty}\right\}\le \alpha^2\ \  for\ i\in S_1,\\
{\textit{error component dynamics:}}\quad &\max\left\{\Arrowvert\bw_t\odot\bm1_{S_1^c}\Arrowvert_{\infty},\Arrowvert\bv_t\odot\bm1_{S_1^c}\Arrowvert_{\infty}\right\}\le\sqrt{\alpha}\ \ for\ i\in S_1^c.
\end{aligned}
\end{equation}
From \eqref{region}, we can observe that for $t\ge T_0$, the iterate $(\bw_t,\bv_t)$ reaches the desired performance level.
 Specifically, each component $\beta_{t,i}$ of the positive strong signal part $\bbeta_{t}\odot\bm1_{S_1}$ increases exponentially in $t$ until it reaches $\beta^*_i/2$. Meanwhile, the weak signal and error part $\bbeta_t\odot\bm1_{S_1^c}$ remains bounded by $\cO(\alpha)$. This observation highlights the significance of small initialization size $\alpha$ for the gradient descent algorithm, as it restricts the error term. A smaller initialization leads to better estimation but with the trade-off of a slower convergence rate.

After each component $\beta_{t,i}$ of the strong signal reaches $\beta^*_i/2$, \textbf{\textit{Stage Two}} starts. In this stage, $\beta_{t,i}$ continues to grow at a slower rate and converges to the true parameter $\beta^*_i$. After this time, after $3\log(m/\alpha^2)/(\eta m)$ iterations, $\beta_{t,i}$ of the strong signal enters a desired interval, which is within a distance of $C_{\epsilon}\sqrt{\log p/n}$ from the true parameter $\beta^*_i$. Simultaneously, the error term $\bbeta_{t}\odot\bm1_{S_1^c}$ remains bounded by $\cO(\alpha)$ until $\cO(\log(1/\alpha)/(\eta\log n))$ iterations.

We summarize the dynamic analysis results described above in Proposition \ref{proerror} and Proposition \ref{prostrong}. By combining the results from these two propositions, we can obtain the proof of Theorem \ref{thm2} in the subsequent subsection.

\section{Proof of Theorem \ref{thm2}}\label{app C}
\begin{proof}
We first utilize Proposition \ref{proerror} to upper bound
the error components $\bbeta_{t}\odot\bm1_{S^c_1}$. By Proposition \ref{proerror}, we are able to control the error parts of the same order as the initialization size $\alpha$ within the time interval $0\le t\le T^*= \log(1/\alpha)/(4\sigma\eta\log n)$. Thus, by direct computation, we have
\begin{equation}\label{error l2}
\begin{aligned}
    \Arrowvert\bbeta_t\odot\bm1_{S_1^c}-\bbeta^*\odot\bm1_{S_1^c}\Arrowvert^2&=\Arrowvert\bbeta_t\odot\bm1_{S_1^c}+(\bbeta_t\odot\bm1_{S_2}-\bbeta^*\odot\bm1_{S_2})\Arrowvert^2\\
    &\overset{(i)}{\le} \Arrowvert\bbeta^*\odot\bm1_{S_2}\Arrowvert^2+p\Arrowvert \bw_{t}\odot\bw_{t}\odot\bm1_{S_1^c}-\bv_{t}\odot\bv_{t}\odot\bm1_{S_1^c}\Arrowvert^2_{\infty}\\&\overset{(ii)}{\le} C_{w}^2\cdot\frac{s_2\log p}{n}+2C_{\alpha}^4\cdot\frac{1}{p},
\end{aligned}
\end{equation}
where $(i)$ is based on the relationship between $\ell_2$ norm and $\ell_{\infty}$ norm and $(ii)$ follows form the results in Proposition \ref{proerror}. As for the strong signal parts, by Proposition \ref{prostrong}, that with probability at least $1-c_1n^{-1}-c_2p^{-1}$, we obtain 
\begin{align}\label{strong l2}
\Arrowvert\bbeta_t\odot\bm1_{S_1}-\bbeta^*\odot\bm1_{S_1}\Arrowvert^2\le s_1\Arrowvert\bbeta_t\odot\bm1_{S_1}-\bbeta^*\odot\bm1_{S_1}\Arrowvert^2_{\infty}\le C_{\epsilon}^2\cdot\frac{s_1\log p}{n}.
\end{align}
Finally, combining \eqref{error l2} and \eqref{strong l2}, with probability at least $1-c_1n^{-1}-c_2p^{-1}$, for any $t$ that belongs to the interval 
$$
[5\log(m/\alpha^2)/(\eta m),\log(1/\alpha)/(4\sigma\eta\log n)],
$$
it holds that
\begin{align*}
\Arrowvert\bbeta_{T_1}-\bbeta^*\Arrowvert^2&=\Arrowvert\bbeta_t\odot\bm1_{S_1^c}-\bbeta^*\odot\bm1_{S_1^c}\Arrowvert^2+\Arrowvert\bbeta_t\odot\bm1_{S_1}-\bbeta^*\odot\bm1_{S_1}\Arrowvert^2\\&\le C_{\epsilon}^2\cdot\frac{s_1\log p}{n}+C_{w}^2\cdot\frac{s_2\log p}{n}+2C_{\alpha}^4\cdot\frac{1}{p}.
\end{align*}
Since $p$ is much larger than $n$, the last term, $2C^4_{\alpha}/p$, is
negligible. Considering the constants $C_{w}$, $C_{\alpha}$ and $C_{\epsilon}$, we finally obtain the error bound of gradient descent estimator in terms of $\ell_2$ norm. Therefore, we conclude the proof of Theorem \ref{thm2}.
\end{proof}
\section{Proof of Propositions \ref{proerror} and \ref{prostrong}}\label{app D}
In this section, we provide the proof for the two propositions mentioned in Section \ref{theory}. First, we introduce some useful lemmas, which are about the coherence of the design matrix $\bX$ and the upper bound
of sub-Gaussian random variables.
\subsection{Technical Lemmas}
\begin{lemma}(General Hoeffding’s inequality) Suppose that $\theta_1,\cdots, \theta_m$ are independent mean- zero sub-Gaussian random variables and $\bmu=(\mu_1,\cdots,\mu_m)\in\RR^m$. Then, for every $t>0$, we have 
$$
\PP\left[\left|\sum_{i=1}^m\mu_i\theta_i\right|\ge t\right]\le 2\exp\left(-\frac{ct^2}{d^2\Arrowvert\bmu\Arrowvert^2}\right),
$$
where $d=\max_{i}\Arrowvert\theta_i\Arrowvert_{\psi_2}$ and $c$ is an absolute constant.
\end{lemma}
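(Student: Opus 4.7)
The plan is to prove this standard concentration inequality via the classical Chernoff bound, which reduces tail probabilities to moment generating function (MGF) estimates, and then to exploit the sub-Gaussian MGF bound together with independence.

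First, I would symmetrize the problem by noting that a union bound applied to the two tails $\{\sum_i \mu_i \theta_i \ge t\}$ and $\{-\sum_i \mu_i \theta_i \ge t\}$ gives the factor of $2$ in front of the final exponential. It therefore suffices to establish a one-sided bound $\PP[S \ge t] \le \exp(-ct^2/(d^2 \|\bmu\|^2))$ where $S := \sum_{i=1}^m \mu_i \theta_i$, since the same argument with $-\theta_i$ in place of $\theta_i$ handles the lower tail (noting that the sub-Gaussian norm is symmetric under negation).

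Next, I would invoke Markov's inequality on $e^{\lambda S}$ for an arbitrary $\lambda > 0$ to obtain $\PP[S \ge t] \le e^{-\lambda t}\, \EE[e^{\lambda S}]$, and then factor the MGF across independent coordinates:
\begin{equation*}
\EE\bigl[e^{\lambda S}\bigr] \;=\; \prod_{i=1}^m \EE\bigl[e^{\lambda \mu_i \theta_i}\bigr].
\end{equation*}
The key analytic input is the standard equivalence for sub-Gaussian random variables: there exists an absolute constant $C>0$ such that any mean-zero random variable $\theta$ satisfies $\EE[e^{s\theta}] \le \exp(C s^2 \|\theta\|_{\psi_2}^2)$ for every $s \in \RR$. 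Applying this coordinatewise with $s = \lambda \mu_i$ and using $\|\theta_i\|_{\psi_2} \le d$ yields
\begin{equation*}
\EE\bigl[e^{\lambda S}\bigr] \;\le\; \exp\!\Bigl(C \lambda^2 d^2 \sum_{i=1}^m \mu_i^2\Bigr) \;=\; \exp\bigl(C \lambda^2 d^2 \|\bmu\|^2\bigr).
\end{equation*}

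Finally, I would optimize the resulting bound $\PP[S \ge t] \le \exp(-\lambda t + C \lambda^2 d^2 \|\bmu\|^2)$ over $\lambda > 0$ by differentiating, which gives the optimal choice $\lambda^\star = t/(2 C d^2 \|\bmu\|^2)$ and produces the exponent $-t^2/(4Cd^2\|\bmu\|^2)$. Setting $c := 1/(4C)$ and combining the two one-sided bounds completes the proof. There is no real obstacle here: the one subtle point is simply citing (rather than re-deriving) the sub-Gaussian MGF estimate, which is an immediate consequence of the definition of $\|\cdot\|_{\psi_2}$ via finite exponential moments; the only calculation that requires care is tracking the universal constant $C$ so that the final $c$ is absolute and independent of $m$, $\bmu$, and $d$.
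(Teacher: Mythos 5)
Your proof is correct and is exactly the standard Chernoff/MGF argument that the paper itself invokes: the paper omits the details, stating only that the result ``can be proved directly by the independence of $\theta_1,\dots,\theta_m$ and the sub-Gaussian property,'' which is precisely what you carry out. Your bookkeeping of the constant (optimizing at $\lambda^\star = t/(2Cd^2\|\bmu\|^2)$ to get $c=1/(4C)$) is also correct.
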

\begin{proof}
General Hoeffding’s inequality can be proofed directly by the independence of $\theta_1,\cdots,\theta_m$ and the sub-Gaussian property. The detailed proof is omitted here.
\end{proof}
\begin{lemma}\label{lem2}
Suppose that $\bX/\sqrt{n}$ is a $n\times p$ $\ell_2$-normalized matrix satisfying $\delta$-incoherence; that is $1/n|\langle\bx_i\odot\bm1_K,\bx_i\odot\bm1_K\rangle|\le\delta,i\not=j$ for any $K\subseteq[n]$. Then, for $s$-sparse vector $\bz\in\RR^p$, we have
$$
\left\Arrowvert\left(\frac{1}{n}\bX^T_K\bX_K-\bI\right)\bz\right\Arrowvert_{\infty}\le \delta s \Arrowvert\bz\Arrowvert_{\infty},
$$
where $\bX_K$ denotes the $n\times p$ matrix whose $i$-th column is $\bx_i$ if $i\in K$ and $\bm0_{p\times 1}$ otherwise.
\begin{proof}
The proof of Lemma \ref{lem2} is similar to Lemma 2 in \cite{li2021implicit}. According to the condition 
$$
\frac{1}{n}|\langle\bx_i\odot\bm1_K,\bx_i\odot\bm1_K\rangle|\le\delta,i\not=j,K\subseteq[n],
$$
we can verify that for any $i\in[p]$,
$$
\left|\left(\frac{1}{n}\bX^T_K\bX_K\bz\right)_i-z_i\right|\le \delta s \Arrowvert\bz\Arrowvert_{\infty}.
$$
Therefore,
$$
\left\Arrowvert\left(\frac{1}{n}\bX^T_K\bX_K-\bI\right)\bz\right\Arrowvert_{\infty}\le \delta s \Arrowvert\bz\Arrowvert_{\infty}.
$$
\end{proof}

\end{lemma}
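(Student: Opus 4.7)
The plan is to prove the inequality entry-wise and then take the $\ell_\infty$ maximum over coordinates $i\in[p]$. For a fixed coordinate $i$, expand the $i$-th entry of $(1/n)\bX_K^T\bX_K\bz$ as
\[
\left(\frac{1}{n}\bX_K^T\bX_K\bz\right)_i = \frac{1}{n}\sum_{j=1}^p \langle \bx_i \odot \bm1_K, \bx_j \odot \bm1_K\rangle\, z_j,
\]
and split the sum into the diagonal term $j=i$ and off-diagonal terms $j \ne i$. Subtracting $z_i$ leaves the sum of off-diagonal contributions plus the diagonal discrepancy $\bigl(\frac{1}{n}\Arrowvert\bx_i\odot\bm1_K\Arrowvert^2 - 1\bigr) z_i$.

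For the off-diagonal part, I would combine the sparsity of $\bz$ with the incoherence hypothesis: at most $s$ of the indices $j\ne i$ satisfy $z_j\ne 0$, and for each such $j$ the assumption gives $\frac{1}{n}|\langle \bx_i \odot \bm1_K, \bx_j \odot \bm1_K\rangle|\le\delta$. Together with $|z_j|\le\Arrowvert\bz\Arrowvert_\infty$, this yields an off-diagonal contribution of at most $\delta s\Arrowvert\bz\Arrowvert_\infty$. Maximizing over $i\in[p]$ then produces the target $\ell_\infty$ bound, provided we can also absorb the diagonal correction.

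The main obstacle is controlling the diagonal entry $\frac{1}{n}\Arrowvert\bx_i\odot\bm1_K\Arrowvert^2$, which equals $1$ only when $K=[n]$ (by the $\ell_2$-normalization of $\bX/\sqrt n$) and is strictly smaller for proper subsets. To handle this, I would interpret the $\delta$-incoherence as also covering the diagonal deviation $|(\frac{1}{n}\bX_K^T\bX_K)_{ii}-1|\le\delta$, so that all $s$ summands (including $j=i$) are uniformly bounded by $\delta$; alternatively, the diagonal term can be treated as a single extra correction absorbed into the same $\delta s\Arrowvert\bz\Arrowvert_\infty$ budget via the sparsity of $\bz$. Either route gives
\[
\left\Arrowvert\left(\frac{1}{n}\bX_K^T\bX_K-\bI\right)\bz\right\Arrowvert_\infty \le \delta s \Arrowvert\bz\Arrowvert_\infty,
\]
mirroring the structure of the analogous Lemma~2 of \cite{li2021implicit}. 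Once the diagonal bookkeeping is pinned down, the rest of the argument is a straightforward sparsity-based summation.
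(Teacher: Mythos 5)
Your expansion of the $i$-th entry and your treatment of the off-diagonal terms is exactly the argument the paper intends (its own proof is a one-line sketch of the same entrywise computation): at most $s$ indices $j$ satisfy $z_j\neq 0$, each off-diagonal inner product is bounded by $\delta$ after the $1/n$ normalization, and so the off-diagonal contribution is at most $\delta s\Arrowvert\bz\Arrowvert_{\infty}$; taking the maximum over $i\in[p]$ finishes that part.

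The diagonal term, however, is a genuine gap: you have correctly located it, but neither of your proposed fixes closes it under the stated hypotheses. The definition of $\delta$-incoherence explicitly restricts to $i\neq j$, so it gives no control on $\frac{1}{n}\Arrowvert\bx_i\odot\bm1_K\Arrowvert^2-1$, and this quantity is not small for general $K$: for $K=\emptyset$ it equals $-1$, and for any small subset $K$ it is close to $-1$. Hence the diagonal correction $\bigl|\bigl(\frac{1}{n}\Arrowvert\bx_i\odot\bm1_K\Arrowvert^2-1\bigr)z_i\bigr|$ can be as large as $\Arrowvert\bz\Arrowvert_{\infty}$, which cannot be absorbed into a budget of $\delta s\Arrowvert\bz\Arrowvert_{\infty}$ in the relevant regime $\delta s<1$ of Assumption~\ref{ass1}; indeed, with $K=\emptyset$ the left-hand side of the claimed inequality is exactly $\Arrowvert\bz\Arrowvert_{\infty}$, so the ``diagonal bookkeeping'' is not a routine step but the place where the statement actually fails for proper subsets $K$. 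To be fair, the paper's own proof has the identical gap---it silently treats the diagonal of $\frac{1}{n}\bX_K^T\bX_K$ as equal to $1$, which the normalization guarantees only when $K=[n]$. A correct version needs either $K=[n]$ or an additional hypothesis bounding $\bigl|\frac{1}{n}\Arrowvert\bx_i\odot\bm1_K\Arrowvert^2-1\bigr|$ (whose contribution must then be added to the right-hand side); this is not cosmetic, since the lemma is later invoked with $K=K_{t,2}$, which the paper itself argues is a small subset of $[n]$.
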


\subsection{Proof of Proposition \ref{proerror}}
\begin{proof}
Here we prove Proposition \ref{proerror} by induction hypothesis. It holds that our initializations $\bw_0=\alpha\bm1_{p\times 1}$ and $\bv_0=\alpha\bm1_{p\times 1}$ satisfy our conclusion given in Proposition \ref{proerror}. As we initialize $w_{0,i}$ and $v_{0,i}$ for any fixed $i\in S_1^c$ with 
$$
|w_{0,i}|=\alpha\le\sqrt{\alpha}\quad and \quad |v_{0,i}|=\alpha\le\sqrt{\alpha},
$$
Proposition \ref{proerror} holds when $t=0$. In the following, we show that for any $t^*$ with $0\le t^*\le T^*=\log(1/\alpha)/(4\sigma\eta\log n)$, if the conclusion of Proposition \ref{proerror} stands for all $t$ with $0\le t< t^*$, then it also stands at the $(t^*+1)$-th step.
From the gradient descent updating rule given in \eqref{pupdate}, the updates of the weak signals and errors $w_{t,i}$, $v_{t,i}$ for any fixed $i\in S_1^c$ are obtained as follows,
$$
w_{t+1,i}=w_{t,i}(1+2\eta G_{t,i})\quad and\quad v_{t+1,i}=v_{t,i}(1-2\eta G_{t,i}).
$$
Recall that $|\mu_{t,k}|\le 1$ for $k=1,\ldots,n$, for any fixed $i\in S^c_1$, then we have
$$
|G_{t,i}|=\left|\frac{1}{n}\sum_{k=1}^n \mu_{t,k}y_kx_{ki}\right|\le \frac{1}{n}\sum_{k=1}^n|x_{ki}|.
$$
For ease of notation, we denote $M_i=n^{-1}\sum_{k=1}^n|x_{ki}|$ and $M=\max_{i\in S_1^c}M_i$. Then we can easily bound the weak signals and errors as follows,
\begin{align}\label{weak}
\Arrowvert \bw_{t+1}\odot\bm1_{S_1^c}\Arrowvert_{\infty}\le(1+2\eta M)\Arrowvert \bw_{t}\odot\bm1_{S_1^c}\Arrowvert_{\infty} \quad and\quad \Arrowvert \bv_{t+1}\odot\bm1_{S_1^c}\Arrowvert_{\infty}\le(1+2\eta M)\Arrowvert \bv_{t}\odot\bm1_{S_1^c}\Arrowvert_{\infty}.
\end{align}
By General Hoeffding’s inequality of sub-Gaussian random variables, with probability at least $1-cn^{-n}$, we obtain $M_i\le \sigma\sqrt{\log n},i\in S^c_1$, where $c$ is a constant. Since $p$ is much larger than $n$, the inequality $(1-cn^{-n})^p>1-cn^{-1}$ holds. 
Then, with probability at least $1-cn^{-1}$, where $c$ is a constant, we have $M\le\sigma\sqrt{\log n}$.

Combined \eqref{weak} and the bound of $M$, we then have
\begin{align*}
    \Arrowvert \bw_{t^*+1}\odot\bm1_{S_1^c}\Arrowvert_{\infty}&\le(1+2\eta M)^{t^*+1}\Arrowvert \bw_{0}\odot\bm1_{S_1^c}\Arrowvert_{\infty}\\
    &= \exp((t^*+1)\log(1+2\eta M))\alpha\\
    &\overset{(i)}{\le}\exp(T^*\log(1+2\eta M))\alpha\\
    &\overset{(ii)}{\le}\exp(T^*\cdot 2\eta M)\alpha\\
    &\overset{(iii)}{\le} \exp((1/2)\log(1/\alpha))\alpha=\sqrt{\alpha},
\end{align*}
where $(i)$ and $(ii)$ follow from $t^*+1\le T^*$ and $\log(1+x)<x$ when $x>0$, respectively. Moreover, $(iii)$ is based on the definition of $T^*$. Similarly, we can prove that $\Arrowvert v_{t^*}\odot\bm1_{S_1^c}\Arrowvert_{\infty}\le \sqrt{\alpha}$. Thus, the induction hypothesis also holds for $t^*+1$. Since $t^*<T^*$ is arbitrarily chosen, we complete the proof of Proposition \ref{proerror}.
\end{proof}

\subsection{Proof of Proposition \ref{prostrong}}
\begin{proof}
In order to analyze strong signals: $\beta_{t,i}=w_{t,i}^2-v_{t,i}^2$ for any fixed $i\in S_1$, we focus on the dynamics $w_{t,i}^2$ and $v_{t,i}^2$. Following the updating rule \eqref{pupdate}, we have
\begin{equation}
\begin{aligned}\label{wsquare}
w_{t+1,i}^2&=w_{t,i}^2+4\eta w_{t,i}^2G_{t,i}+4\eta^2 w_{t,i}^2G_{t,i}^2,\\
v_{t+1,i}^2&=v_{t,i}^2-4\eta v_{t,i}^2G_{t,i}+4\eta^2 v_{t,i}^2G_{t,i}^2.
\end{aligned}
\end{equation}
Without loss of generality, here we just analyze entries $i\in S_1$ with $\beta^*_i>0$. Analysis for the negative case with $\beta^*_i<0,i\in S_1$ is almost the same and is thus omitted. We divide our analysis into two stages, as discussed in Appendix \ref{app B}.

Specifically, in \textbf{\textit{Stage One}}, since we choose the initialization as $\bw_0=\bv_0=\alpha\bm1_{p\times1}$, we obtain $\beta_{0,i}=0$ for any fixed $i\in S_1$, we show after $2\log(\beta^*_i/\alpha^2)/\eta\beta^*_i$ iterations, the gradient descent coordinate $\beta_{t,i}$ will exceed $\beta^*_i/2$. Therefore, all components of strong signal part will exceed half of the true parameters after $2\log(m/\alpha^2)/(\eta m)$ iterations. Furthermore, we calculate the number of iterations required to achieve $\beta_{t,i}\ge\beta^*_i-\epsilon$, where $\epsilon=C_{\epsilon}\sqrt{\log p/n}$, in \textbf{\textit{Stage Two}}. Thus, we conclude the proof of Proposition \ref{prostrong}.

\textbf{\textit{Stage One.}} First, we introduce some new notations. In $t$-th iteration, according to the values of $\mu_{t,i}$, we divide the set $[n]$ into three parts, $K_{t,1}=\{i\in[n]:\mu_{t,i}=1\}$, $K_{t,2}=\{i\in[n]:0<\mu_{t,i}<1\}$ and $K_{t,3}=\{i\in[n]:\mu_{t,i}=0\}$, with cardinalities of $k_{t,1}$, $k_{t,2}$ and $k_{t,3}$, respectively. From \cite{Nesterov2005,zhou2010nesvm}, most elements
of $\bmu_t$ are $0$ or $1$ and the proportion of these elements will
rapidly increase with the decreasing of $\gamma$, which means that $k_{t,2}\ll (k_{t,1}+k_{t,3})$ controlling $\gamma$ in a small level. 

According to the updating formula of $w^2_{t+1,i}$ in \eqref{wsquare}, we define the following form of element-wise bound $\xi_{t,i}$ for any fixed $i\in S_1$ as
\begin{equation}\label{xi}
\begin{aligned}
\xi_{t,i}:&=1-\frac{w_{t,i}^2}{w_{t+1,i}^2}\left(1-\frac{4\eta }{\gamma n}(\beta_{t,i}-\beta_i^*)\right)\\
&=\frac{1}{w_{t+1,i}^2}\left(w_{t+1,i}^2-w_{t,i}^2+\frac{4\eta }{\gamma n}w_{t,i}^2(\beta_{t,i}-\beta_i^*)\right)\\
&=\frac{w_{t,i}^2}{w_{t+1,i}^2}\left(4\eta\left(G_{t,i}+\frac{1}{\gamma n}(\beta_{t,i}-\beta^*_{i})\right)+4\eta^2G_{t,i}^2\right).
\end{aligned}
\end{equation}
Rewriting \eqref{xi}, we can easily get that
\begin{align}\label{re xi}
w_{t+1,i}^2=w_{t,i}^2\left(1-\frac{4\eta }{\gamma n}(\beta_{t,i}-\beta_i^*)\right)+\xi_{t,i}w_{t+1,i}^2.
\end{align}
From \eqref{re xi}, it is obvious that if the magnitude of $\xi_{t,i}$ is sufficiently small, we can easily conclude that $w_{t+1,i}^2\ge w_{t,i}^2$. Therefore, our goal is to evaluate the magnitude of $\xi_{t,i}$ in \eqref{xi} for any fixed $i\in S_1$. First, we focus on the term $G_{t,i}+(\beta_{t,i}-\beta^*_{i})/(\gamma n)$ in \eqref{xi}. In particular, recall the definition of $G_{t,i}$ and we expand $G_{t,i}+(\beta_{t,i}-\beta^*_{i})/(\gamma n)$ in the following form
\begin{align*}
G_{t,i}+\frac{1}{\gamma n}(\beta_{t,i}-\beta^*_{i})&=\frac1n \left(\sum_{k\in K_{t,1}}y_kx_{ki}+\sum_{k\in K_{t,2}}y_kx_{ki}\frac{1-y_k\bx_{k}^T\bbeta_t}{n\gamma}\right)+\frac{1}{\gamma n}(\beta_{t,i}-\beta^*_i)\\
&=\frac1n\left(\sum_{k\in K_{t,1}}y_kx_{ki}+\sum_{k\in K_{t,2}}\frac{y_kx_{ki}}{n\gamma}\right)-\frac{1}{\gamma n^2}\left(\sum_{k\in K_{t,2}}x_{k,i}\bx_{k}^T\bbeta_t-n(\beta_{t,i}-\beta_i^*)\right)\\
&=\frac1n\left(\sum_{k\in K_{t,1}}y_kx_{ki}+\sum_{k\in K_{t,2}}\frac{(y_k-\bx_k^T\bbeta^*)x_{ki}}{n\gamma}\right)\\
&\quad-\frac{1}{\gamma n}\left(\frac{1}{n}\sum_{k\in K_{t,2}}x_{k,i}\bx_{k}^T(\bbeta_t-\bbeta^*)-(\beta_{t,i}-\beta_i^*)\right)\\
&\widehat{=}(I_1)+(I_2).
\end{align*}
For the term $(I_1)$, by the condition $k_{t,2}\ll k_{t,1}$ and General Hoeffding’s inequality, we have
\begin{align}\label{I1}
\frac{1}{n}\left|\sum_{k\in K_{t,1}}y_kx_{ki}+\sum_{k\in K_{t,2}}\frac{(y_k-\bx_k^T\bbeta^*)x_{ki}}{n\gamma}\right|\le \frac{1}{n}\sigma\sqrt{n\log p}=\sigma\sqrt{\frac{\log p}{n}},
\end{align}
holds with probability at least $1-cp^{-1}$, where $c$ is a constant. 

For the term $(I_2)$, let $\bX_{t,2}\in\RR^{n\times p}$ denote the matrix whose $k$-th column is $\bx_k$ if $k\in K_{t,2}$. Then, we approximate $(I_2)$ based on the assumption on the design matrix $\bX$ via Lemma \ref{lem2}, 
\begin{align}\label{I2}
\left\Arrowvert\frac{1}{n}\bX_{t,2}^T\bX_{t,2}(\bbeta_{t}-\bbeta^*)-(\bbeta_t\odot\bm1_{S_1}-\bbeta^*\odot\bm1_{S_1})\right\Arrowvert_{\infty}\lesssim\delta s\kappa m.
\end{align}
By \eqref{I1}, \eqref{I2}, condition of the minimal strength $m\ge \sigma \log p\sqrt{\log p/n}$ and condition $\delta\lesssim 1/(\kappa s\log p)$, we have
\begin{align}\label{rip}
\left|G_{t,i}+\frac{1}{\gamma n}(\beta_{t,i}-\beta^*_{i})\right|\lesssim \frac{m}{\gamma n\log p}.
\end{align}
Then, we can bound $G_{t,i}$ through 
\begin{equation}
\begin{aligned}\label{G}
  |G_{t,i}|&\le|G_{t,i}+1/(\gamma n )(\beta_{t,i}-\beta^*_{i})|+|1/(\gamma n)(\beta_{t,i}-\beta^*_{i})|\\&\lesssim \frac{1}{\gamma n}\left(\frac{m}{\log p}+\kappa m\right).  
\end{aligned}
\end{equation}
Note that for any fixed $i\in S_1$, $G_{t,i}\le n^{-1}\sum_{k=1}^n|x_{ki}|$. By General Hoeffding’s inequality, we have with probability at least $1-c_1n^{-1}$, $|G_{t,i}|\lesssim \log n$ and then $|\eta G_{t,i}|< 1$ based on the assumption of $\eta$. Therefore, $w_{t,i}^2/w_{t+1,i}^2$ is always bounded by some constant greater than $1$. 

Recalling the definition of element-wise bound $\xi_{t,i}$, and combining \eqref{rip} and \eqref{G}, we have that
\begin{align}\label{xi small}
|\xi_{t,i}|\overset{(i)}{\lesssim} \frac{1}{\gamma n}\left(\frac{\eta m}{\log p}+\eta^2\kappa^2m^2\right)\overset{(ii)}{\lesssim}\frac{\eta m}{\gamma n\log p},
\end{align}
where we use the bound of $w_{t,i}^2/w_{t+1,i}^2$ for any fixed $i\in S_1$ in step$(i)$ and step $(ii)$ is based on $\eta \kappa m\le m/\log p$ and $\kappa m\lesssim1$. Thus, we conclude that $\xi_{t,i}$ is sufficiently small for any fixed $i\in S_1$.

Now, for any fixed $i\in S_1$, when $0\le \beta_{t,i}\le \beta^*_i/2$, we  can get the increment of $w_{t,i}^2$ for any fixed $i\in S_1$ according to \eqref{xi small},
\begin{align*}
w_{t+1,i}^2&={w_{t,i}^2\left.\left(1-\frac{4\eta }{\gamma n}(\beta_{t,i}-\beta^*_i)\right)\right/\left(1+c\frac{\eta m}{\gamma n\log p}\right)}\\
&\overset{(i)}{\ge}w_{t,i}^2\left.\left(1+\frac{2\eta \beta^*_i}{\gamma n}\right)\right/\left(1+c\frac{\eta m}{\gamma n\log p}\right)\\
&\overset{(ii)}{\ge}w_{t,i}^2\left(1+\frac{\eta \beta^*_i}{\gamma n}\right),
\end{align*}
where $(i)$ follows from $0\le\beta_{t,i}\le\beta^*_i/2$ and $(ii)$ holds since $m/\log p\lesssim \beta^*_i$. Similarly, we can analyze $v_{t,i}^2$ to get that when $0\le\beta_{t,i}\le\beta^*_i/2$, 
$$
v_{t+1,i}^2\le v_{t,i}^2\left(1-{\eta \beta^*_i}/{\gamma n}\right).
$$
Therefore, $w_{t,i}^2$ increases at an exponential rate while $v_{t,i}^2$ decreases at an exponential rate. \textbf{\textit{Stage One}} ends when $\beta_{t,i}$ exceeds $\beta^*_i/2$, and our goal is to estimate $t_{i,0}$ that satisfies
$$
\beta_{t,i}\ge \left(1+\frac{\eta \beta^*_i}{\gamma n}\right)^{t_{i,0}}\alpha^2-\left(1-\frac{\eta \beta^*_i}{\gamma n}\right)^{t_{i,0}}\alpha^2\ge \beta_i^*/2.
$$
Note that $\{v_{t,i}^2\}_{t\ge0}$ forms a decreasing sequence and thus is bounded by $\alpha^2$. Hence, it suffices to solve the following inequality for $t_{i,0}$, 
$$
\left(1+\frac{\eta \beta^*_i}{\gamma n}\right)^{t_{i,0}}\alpha^2\ge\beta^*_i/2+\alpha^2,
$$
which is equivalent to obtain $t_{i,0}$ satisfying
$$
t_{i,0}\ge T_{i,0}=\log\left.\left(\frac{\beta^*}{2\alpha^2}+1\right)\right/\log\left(1+\frac{\eta \beta^*_i}{\gamma n}\right).
$$
For $T_{i,0}$, by direct calculation, we have
\begin{align*}
    T_{i,0}&\overset{(i)}{\le}\log\left(\frac{\beta^*}{2\alpha^2}+1\right)\left.\left(1+\frac{\eta \beta^*_i}{\gamma n}\right)\right/\left(\frac{\eta \beta^*_i}{\gamma n}\right)\\&\overset{(ii)}{\le} 2\log\left.\left(\frac{\beta^*_i}{\alpha^2}\right)\right/\eta\beta^*_i,
\end{align*}
where $(i)$ follows from $x\log x-x+1\ge0$ when $x\ge 0$ and $(ii)$ holds due to $\log(x/2+1)\le \log x$ when $x\ge2$ as well as the assumption on $\gamma$ and $\eta\beta_i^*\le1$. Thus, we set $t_{i,0}=2 \log(\beta^*_i/\alpha^2)/(\eta\beta^*_i)$ such that for all $t\ge t_{i,0}$, $\beta_{t,i}\ge \beta^*_i/2$ for all $i\in S_1$. 

\textbf{\textit{Stage Two.}} Define $\epsilon=C_{\epsilon}\sqrt{\log p/n}$ and $a_{i,1}=\lceil\log_2(\beta^*_i/\epsilon)\rceil$. Next, we refine the element-wise bound $\xi_{t,i}$ according to \eqref{xi}. For any fixed $i\in S_1$, if there exists some $a$ such that $1\le a\le a_{i,1}$ and $(1-1/2^a)\beta^*_i\le \beta{t,i}\le (1-1/2^{a+1})\beta^*_i$, then, based on the analytical thinking in \textbf{\textit{Stage One}}, we can easily deduce that 
\begin{align}\label{xi new}
|\xi_{t,i}|\lesssim \frac{\eta m}{2^a\gamma n\log p}.
\end{align}
Using this element-wise bound \eqref{xi new}, we get the increment of $w_{t+1,i}^2$ and decrement of $v_{t+1,i}^2$,
$$
w_{t+1,i}^2\ge w_{t,i}^2\left(1+\frac{\eta \beta^*_i}{2^a\gamma n}\right)\quad and\quad v_{t+1,i}^2\le v_{t,i}^2\left(1-\frac{\eta \beta^*_i}{2^a\gamma n}\right).
$$
We define $t_{i,a}$ as the smallest $t$ such that $\beta_{t+t_{i,a},i}\ge (1-1/2^{a+1})\beta^*_i$. Intuitively, suppose that the current estimate $\beta_{t,i}$ is between $(1-1/2^a)\beta^*_i$ and $(1-1/2^{a+1})\beta^*_i$, we aim to find the number of iterations required for the sequence $\{\beta_{t,i}\}_{t\ge0}$ to exceed $(1-1/2^{a+1})\beta^*_i$. To obtain a sufficient condition for $t_{i,a}$, we construct the following inequality,
$$
\beta_{t+t_{i,a},i}\ge w_{t,i}^2\left(1+\frac{\eta \beta^*_i}{2^a\gamma n}\right)^{t_{i,a}}-v_{t,i}^2\left(1-\frac{\eta \beta^*_i}{2^a\gamma n}\right)^{t_{i,a}}\ge (1-1/2^{a+1})\beta^*_i.
$$
Similar with \textbf{\textit{Stage One}}, the sequence $\{v_{t,i}^2\}_{t\ge0}$ is bounded by $\alpha^2$. Then, it is sufficient to solve the following inequality for $t_{i,a}$,
$$
t_{i,a}:\ge T_{i,a}=\log\left.\left(\frac{(1-1/2^{a+1})\beta^*_i+\alpha^2}{w_{t,i}^2}\right)\right/\log\left(1+\frac{\eta \beta^*_i}{2^a\gamma n}\right).
$$
To obtain a more precise upper bound of $T_{i,a}$, we have
\begin{align*}
T_{i,a}&=\log\left.\left(\frac{(1-1/2^{a+1})\beta^*_i+\alpha^2}{w_{t,i}^2}\right)\right/\log\left(1+\frac{\eta \beta^*_i}{2^a\gamma n}\right)\\
&\overset{(i)}{\le} \log\left.\left(\frac{(1-1/2^{a+1})\beta^*_i+\alpha^2}{(1-1/2^a)\beta^*_i}\right)\right/\log\left(1+\frac{\eta \beta^*_i}{2^a\gamma n}\right)\\
&\overset{(ii)}{\le}\log\left.\left(1+\frac{1/2^{a+1}}{1-1/2^a}+\frac{\alpha^2}{(1-1/2^a)\beta^*_i}\right)\left(1+\frac{\eta \beta^*_i}{2^a\gamma n}\right)\right/\left(\frac{\eta \beta^*_i}{2^a\gamma n}\right),
\end{align*}
where $(i)$ is based on the condition $w_{t,i}^2\ge (1-1/2^a)\beta^*_i$, $(ii)$ stands since $x\log x-x+1\ge 0$ when $x\ge 0$. By direct calculation, we obtain
\begin{equation}
\begin{aligned}\label{tia}
    T_{i,a}&\overset{(iii)}{\le} \left.\left(\frac{1/2^{a+1}}{1-1/2^a}+\frac{\alpha^2}{(1-1/2^a)\beta^*_i}\right)\right/\left(\frac{\eta\beta^*_i}{2^{a+1}\gamma n}\right)\\&\overset{(iv)}{\le} \frac{2}{\eta\beta^*_i}+\frac{2^{a+2}\alpha^2}{\eta\beta^{*2}_i},
\end{aligned}
\end{equation}
where we use $\log(x+1)\le x$ when $x\ge 0$ in step $(iii)$ and $1-1/2^a\ge1/2$ in step $(iv)$, respectively. Recall the assumption $a\le a_{i,1}=\lceil\log_2(\beta^*_i/\epsilon)\rceil$ with $\epsilon=C_{\epsilon}\sqrt{\log p/n}$, then we get $2^{a+2}\le 4\beta^*_i/\epsilon\le 4\sqrt{n/\log p}\beta^*_i/C_{\epsilon}$. Moreover, by the assumption on the initialization size $\alpha$, we have $\alpha^2\le C_{\alpha}/p^2$. Thus, we can bound ${2^{a+2}\alpha^2}/{\eta\beta^{*2}_i}$ in \eqref{tia} as
\begin{align}\label{tia1}
    \frac{2^{a+2}\alpha^2}{\eta\beta^{*2}_i}\le\sqrt{\frac{n}{\log p}}\frac{4C_{\alpha}}{C_{\epsilon}p^2\eta\beta^*_i}\overset{(i)}{\le} \frac{1}{\eta\beta^*_i},
\end{align}
where $(i)$ holds when $p^2\log p\ge 4C_{\alpha}\sqrt{n}/C_{\epsilon}$. Combined \eqref{tia} and \eqref{tia1}, we calculate the final bound of $T_{i,a}$ as 
$$
T_{i,a}\le \frac{3}{\eta\beta^*_i},
$$
for any $a\le a_{i,1}$. If there exists an $1\le a\le a_{i,1}$ such that $\beta_{t,i}\in[(1-1/2^{a})\beta^*_i,(1-1/2^{a+1})\beta^*_i]$, after $t_{i,a}\ge 3/\eta\beta^*_i$ iterations, we can guarantee that $\beta_{t,i}\ge (1-1/2^{a+1})\beta^*_i$.
Now recalling the definition of $a_{i,1}$, we have $\beta^*_i/2^{a_{i,1}}\le \epsilon=C_{\epsilon}\sqrt{\log p/n}$. Therefore, with at most $\sum_{a=0}^{a_{i,1}}T_{i,a}$ iterations, we have $\beta_{t,i}\ge \beta^*_i-\epsilon$. By the assumption of $\alpha$, we obtain
\begin{align*}
    \sum_{a=0}^{a_{i,1}}T_{i,a}&\le 2 \log(\beta^*_i/\alpha^2)/(\eta\beta^*_i)+3\left.\left\lceil\log_2\left(\frac{\beta^*_i}{\epsilon}\right)\right\rceil\right/(\eta\beta^*_i) \\&\le 5\log\left.\left(\frac{\beta^*_i}{\alpha^2}\right)\right/(\eta\beta^*_i).
\end{align*}
Since the function $\log(x/\alpha^2)/(\eta x)$ is decreasing with respect to $x$, we have after $5\log(m/\alpha^2)/(\eta m)$ iterations that $|\beta_{t,i}-\beta_i^*|\lesssim \sqrt{\log p/n}$ for any fixed $i\in S_1$. Thus, we complete the proof of Proposition \ref{prostrong}.
\end{proof}
\end{document}